\documentclass[11pt,a4paper]{article}

\usepackage[margin=1in]{geometry}
\usepackage[T1]{fontenc}
\usepackage{lmodern}
\IfFileExists{microtype.sty}{\usepackage{microtype}}{}
\usepackage{amsmath,amssymb,amsthm,mathtools}
\usepackage{booktabs}
\usepackage{float}
\usepackage{placeins}
\usepackage{enumitem}
\usepackage{array}
\usepackage{graphicx}
\usepackage{tikz}
\usetikzlibrary{arrows.meta,positioning}
\usepackage[colorlinks=true,linkcolor=blue,citecolor=blue,urlcolor=blue]{hyperref}
\usepackage{aliascnt}
\usepackage[nameinlink,noabbrev]{cleveref}

\newtheorem{theorem}{Theorem}[section]
\newaliascnt{proposition}{theorem}
\newtheorem{proposition}[proposition]{Proposition}
\aliascntresetthe{proposition}
\newaliascnt{lemma}{theorem}
\newtheorem{lemma}[lemma]{Lemma}
\aliascntresetthe{lemma}
\newaliascnt{corollary}{theorem}
\newtheorem{corollary}[corollary]{Corollary}
\aliascntresetthe{corollary}
\theoremstyle{definition}
\newaliascnt{definition}{theorem}
\newtheorem{definition}[definition]{Definition}
\aliascntresetthe{definition}
\newaliascnt{example}{theorem}
\newtheorem{example}[example]{Example}
\aliascntresetthe{example}
\newaliascnt{remark}{theorem}
\newtheorem{remark}[remark]{Remark}
\aliascntresetthe{remark}

\crefname{theorem}{Theorem}{Theorems}
\crefname{proposition}{Proposition}{Propositions}
\crefname{lemma}{Lemma}{Lemmas}
\crefname{corollary}{Corollary}{Corollaries}
\crefname{definition}{Definition}{Definitions}
\crefname{example}{Example}{Examples}
\crefname{remark}{Remark}{Remarks}

\newcommand{\Signs}{\{+1,-1\}}

\DeclareMathOperator{\rank}{rank}

\title{Signed GLMY Homology of Signed Graphs via Double Covers}
\author{Shuliang Bai$^{1}$\quad Jingyan Li$^{1,*}$\quad
Shing-Tung Yau$^{1,2}$\\[6pt]
\small $^{1}$Beijing Institute of Mathematical Sciences and Applications (BIMSA),
Beijing 101408, China\\
\small $^{2}$Yau Mathematical Sciences Center, Tsinghua University,
Beijing 100084, China\\
\small $^{*}$Corresponding author:
\href{mailto:jingyanli@bimsa.cn}{jingyanli@bimsa.cn}}
\date{}
\hypersetup{pdftitle={Signed GLMY Homology of Signed Graphs via Double Covers},
            pdfauthor={Shuliang Bai, Jingyan Li, and Shing-Tung Yau},
            pdfkeywords={signed graphs, GLMY homology, path homology,
              signed double covers, switching invariance, signed Laplacians}}

\begin{document}
\maketitle

\begin{abstract}
We define a signed GLMY chain complex over \(\mathbb R\) for signed
digraphs using sheet-labelled regular paths.  The complex is naturally
isomorphic to the deck anti-invariant subcomplex of the ordinary GLMY
complex on the signed double cover.  The double-cover realization yields
switching invariance and recovers ordinary GLMY homology for
switching-balanced signings.  Bidirected completion gives an
orientation-independent homology theory for signed graphs.  For a signed
graph, the zero-dimensional homology identifies with the kernel of the
signed Laplacian and has dimension equal to the number of balanced
connected components.
Signed GLMY homology is functorial under signed weak morphisms, which combine
vertex maps with switching functions and allow compatible arrow
contractions.  For signed digraphs, the all-positive reduction retains the
orientation sensitivity of ordinary GLMY homology, while explicit
computations show additional sensitivity to the arrow signs.  For a fixed
digraph with five vertices and nine arrows, we classify all \(512\) arrow
signings and obtain exactly four signed Betti vectors.  Precisely \(16\)
signings have nonzero second signed GLMY homology.
\end{abstract}

\medskip
\noindent\textbf{Keywords:} signed graphs; GLMY homology; path homology;
signed double covers; switching invariance; signed Laplacians.

\smallskip
\noindent\textbf{2020 Mathematics Subject Classification:}\\
05C22 (primary); 05C20, 55N25, 55N35 (secondary).

\section{Introduction}

A signed graph \(\Sigma=(G,\sigma)\) has edge labels in \(\Signs\).
Switching changes these labels while preserving their products around
cycles.  The resulting sign holonomy determines Harary balance and the
nullity of the signed Laplacian
\cite{Harary1953,Zaslavsky1982,Zaslavsky1991}.  Signed graphs also model
networks with cooperative and antagonistic interactions.

Grigor'yan, Lin, Muranov and Yau introduced the theory under the name
\emph{path homology} \cite{GLMY2012}.  It is now commonly referred to as
\emph{GLMY homology}, after the initials of the four authors; we use this
terminology throughout.
GLMY homology associates a chain complex to a digraph by selecting
linear combinations of allowed directed paths whose boundaries remain
allowed \cite{GLMY2012,GLMY2014Homotopy,GLMY2020}.  The GLMY boundary relations
depend on directions and shortcuts: a transitive triangle has an allowed
two-path whose boundary fills the triangle, whereas a directed three-cycle
has no nonzero boundary-preserving two-chain.  Different orientations of
the same graph can therefore have different GLMY homology.

We construct signed GLMY homology for digraphs and use bidirected
completion to define an orientation-independent invariant of signed
graphs.  For a signed digraph \((D,\sigma)\), the signed regular path
modules retain the sheet labels inherited by deleted faces.  A face can be
regular without being allowed, even when its base vertices are joined by
an arrow.  The signed GLMY complex consists of the allowed chains whose
boundaries remain allowed.  The signed complex is naturally isomorphic
to the deck anti-invariant subcomplex of the ordinary GLMY complex on the signed double
cover, giving
\[
        H_p^\sigma(D)\cong H_p(\widetilde D_\sigma)^-.
\]
This realization also reduces the computation of signed GLMY homology to
an ordinary GLMY computation on the double cover followed by restriction to
the deck anti-invariant summand.
Vertex switching induces an equivariant isomorphism of double covers.
Consequently, signed GLMY homology is switching invariant and agrees with
ordinary GLMY homology for switching-balanced signings.

For a signed graph \(\Sigma\), its bidirected completion
\(\overleftrightarrow\Sigma\) replaces each edge by the two opposite
arrows with the same sign.  Define
\[
        \mathcal H_p^\sigma(\Sigma)
        :=H_p^\sigma(\overleftrightarrow\Sigma)
        \cong H_p(\widetilde{\overleftrightarrow\Sigma}_\sigma)^-.
\]
The definition of \(\mathcal H_p^\sigma(\Sigma)\) requires no auxiliary
orientation.  The standard vertex inner product identifies
\(\mathcal H_0^\sigma(\Sigma)\) with the kernel of the signed Laplacian:
\[
        \mathcal H_0^\sigma(\Sigma)\cong\ker L^\sigma(\Sigma).
\]
Thus \(\dim\mathcal H_0^\sigma(\Sigma)\) counts balanced connected
components.  Higher groups also depend on the signing: two signings of the
bowtie graph have different first homology, and the signings of a fixed
five-vertex digraph yield four distinct Betti vectors in degrees zero,
one and two.  Signed weak morphisms induce homology maps compatible with
identities and composition.

Related homology theories for digraphs and quivers include singular and
cubical theories \cite{LiMuranovWuYau2026SingularQuivers}, cellular homology
\cite{TangYau2026Cellular}, the primitive variant of GLMY homology and circuit homology
\cite{LiMuranovWuYau2025Primitive,LiMuranovWuYau2025Circuits}, and weighted
variants of GLMY homology \cite{LinRenWangWu2019,MuranovSzczepkowskaVershinin2023}.
Weighted variants of GLMY homology modify coefficients through vertex
or path weights.  In the signed theory, arrow signs specify
one-dimensional real transports.  The transports define a rank-one
local system on the one-dimensional realization with one edge for each
arrow, so antiparallel arrows may carry independent signs.  The signed
boundary retains the relative sheet labels of deleted faces.  The signed
GLMY construction selects the anti-invariant part of the GLMY complex of
the signed double cover and differs from the equivariant construction
of Fu and Yau \cite{FuYau2026}.

We recall ordinary GLMY homology over a commutative ring \(K\) with
identity, writing \(H_p(D;K)\) when the coefficients need to be specified.
All signed chain spaces, signed homology groups, dimensions and Laplacians
in the remainder of the paper are over \(\mathbb R\).  We also omit the
coefficient symbol in ordinary real GLMY homology.  The arrow labels belong
to the multiplicative sign set \(\Signs\); they are distinct from the choice
of homology coefficients.  The identification of anti-invariant homology
with the anti-invariant part of cover homology uses the real projections
\(\frac12(\operatorname{id}\pm\tau)\).

Section~\ref{sec:signed-digraphs} develops the signed complex and its
cover realization.  Section~\ref{sec:canonical} treats undirected signed
graphs, Section~\ref{sec:functoriality} proves functoriality, and
Section~\ref{sec:h2-example} gives a complete sign-sensitive computation.
\section{Preliminaries}\label{sec:preliminaries}

\subsection{Signed graphs and switching}\label{sec:signed-graphs}

\begin{definition}
A signed graph is a pair $\Sigma=(G,\sigma)$, where $G=(V,E)$ is a finite simple graph and $\sigma:E\to\Signs$ is a sign function.
\end{definition}

A cycle in $G$ is a closed vertex sequence
\[
        C=v_0v_1\cdots v_{k-1}v_0, \qquad k\geq 3,
\]
such that $v_0,\ldots,v_{k-1}$ are pairwise distinct and
$v_rv_{r+1}\in E$ for $0\leq r\leq k-1$, where $v_k:=v_0$.
The sign of $C$ is
\[
        \sigma(C)=\prod_{r=0}^{k-1}\sigma(v_rv_{r+1}).
\]
The cycle $C$ is balanced if $\sigma(C)=+1$, and unbalanced if
$\sigma(C)=-1$.  The signed graph is balanced if every cycle is balanced.

\begin{definition}
For a function $\eta:V\to\Signs$, the switched signing is
\[
        \sigma^\eta(uv)=\eta(u)\sigma(uv)\eta(v).
\]
Two signings $\sigma$ and $\sigma'$ on the same underlying graph $G$
are switching equivalent if $\sigma'=\sigma^\eta$ for some
$\eta:V\to\Signs$.  The all-positive signing is the signing
$\sigma_+$ defined by $\sigma_+(e)=+1$ for every $e\in E$.
Switching preserves the sign of every cycle, and hence preserves balance.
\end{definition}

The following classical result is due to Harary \cite{Harary1953}.

\begin{theorem}[Harary's balance criterion]\label{thm:harary}
For a connected signed graph $\Sigma=(G,\sigma)$, the following are equivalent:
\begin{enumerate}[label=\textup{(\roman*)}]
    \item $\Sigma$ is balanced;
    \item $\Sigma$ is switching equivalent to the all-positive signing;
    \item $V(G)=V_+\sqcup V_-$, with either part allowed to be empty,
    such that positive edges lie within the parts and negative edges lie
    between the parts.
\end{enumerate}
\end{theorem}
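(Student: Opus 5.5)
We prove the cycle of implications (i)$\Rightarrow$(iii)$\Rightarrow$(ii)$\Rightarrow$(i). The step (ii)$\Rightarrow$(i) is immediate from the remark after the definition of switching. Switching preserves the sign of every cycle, since each vertex of a cycle contributes its factor $\eta(v)$ exactly twice. Hence if $\sigma=\sigma_+^\eta$, every cycle has sign $+1$.

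For (iii)$\Rightarrow$(ii), we define $\eta(v)=+1$ for $v\in V_+$ and $\eta(v)=-1$ for $v\in V_-$. Then $\eta(u)\eta(v)=+1$ exactly when $u,v$ lie in the same part, which by (iii) is exactly when $\sigma(uv)=+1$. Thus $\sigma^\eta(uv)=\eta(u)\sigma(uv)\eta(v)=+1$ for every edge, and $\sigma^\eta=\sigma_+$. Since $\eta$ is an involution pointwise, $\sigma=\sigma_+^\eta$, so the two signings are switching equivalent.

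The substantive step is (i)$\Rightarrow$(iii). We fix a spanning tree $T$ of the connected graph $G$ and a root $r$. For each vertex $v$ let $\eta(v)$ be the product of the signs along the unique $T$-path from $r$ to $v$. We set $V_\pm=\{v:\eta(v)=\pm1\}$. By construction, every tree edge $uv$ satisfies $\sigma(uv)=\eta(u)\eta(v)$, so it is positive within a part and negative between parts.

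For a non-tree edge $uv$, we use the fundamental cycle formed by $uv$ together with the $T$-path from $u$ to $v$. This path passes through the meeting vertex $w$ of the root paths to $u$ and $v$, and its sign equals $\eta(u)\eta(v)$, because the common segment from $r$ to $w$ cancels. The path has at least two edges, since $G$ is simple and $uv\notin T$. Together with $uv$ it therefore forms a genuine cycle of length at least $3$ with distinct vertices. Balance gives $\sigma(uv)\eta(u)\eta(v)=+1$, so $\sigma(uv)=\eta(u)\eta(v)$ again, and (iii) holds for all edges.

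The only delicate point is checking that the fundamental cycle satisfies the paper's definition of a cycle: distinct vertices and $k\ge3$. This holds because tree paths are simple and $uv$ is not a tree edge.
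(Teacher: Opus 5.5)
The paper gives no proof of this theorem: it quotes it as a classical result of Harary, so there is no in-paper argument to compare with, and your proposal supplies one. Your proof is correct and complete, and it is the standard argument.
\begin{itemize}
\item For (ii)$\Rightarrow$(i), each vertex of a cycle contributes $\eta(v)^2=1$ to the switched product, so cycle signs are unchanged.
\item For (iii)$\Rightarrow$(ii), you switch by the sign function of the bipartition.
\item For (i)$\Rightarrow$(iii), you use the spanning-tree potential together with fundamental cycles.
\end{itemize}
In the last step you check that the fundamental cycle of a non-tree edge $uv$ is a cycle in the paper's sense. The tree path is simple, and it has at least two edges because $G$ is simple and $uv\notin T$. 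This is exactly what is needed to invoke balance.

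Connectivity enters only through the choice of a spanning tree. With a spanning forest, the same argument gives the componentwise statement used in \cref{rem:underlying-signed-graph}. Your potential $\eta$ is also the propagation that the paper only sketches at the end of the proof of \cref{cor:canonical-H0}. There, a solution of $h(v)=\sigma(uv)h(u)$ is said to extend consistently exactly when every cycle is positive, and your argument supplies the details of that claim.
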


\begin{definition}\label{def:signed-digraph}
A signed digraph is a pair $(D,\sigma)$, where \(D=(V,A)\) is a finite
loopless digraph with at most one arrow from \(i\) to \(j\) for each ordered
pair \((i,j)\), and \(\sigma:A\to\Signs\) is an arrow-sign function.
\end{definition}

\begin{definition}\label{def:switching-balanced-digraph}
Let \((D,\sigma)\) be a signed digraph.  We call \((D,\sigma)\)
\emph{switching-balanced} if there exists a function
\(\eta:V(D)\to\Signs\) such that
\[
        \sigma(ij)=\eta(i)\eta(j)
\]
for every arrow \(i\to j\).  Equivalently, after the switching
\[
        \sigma^\eta(ij)=\eta(i)\sigma(ij)\eta(j),
\]
all arrow signs are \(+1\).

If both arrows $i\to j$ and $j\to i$ are present, switching balance implies
\[
        \sigma(ij)=\eta(i)\eta(j)=\eta(j)\eta(i)=\sigma(ji).
\]
Antiparallel arrows with different signs therefore obstruct switching
balance, although they are permitted in a general signed digraph.
\end{definition}

\begin{remark}
\label{rem:directed-cycle-positivity}
Positivity of all directed cycles does not imply switching balance.
The transitive triangle with arrows $0\to1$, $1\to2$ and $0\to2$ has no
directed cycles.  If
$\sigma(01)\sigma(12)\sigma(02)=-1$, however, the equations
$\sigma(ij)=\eta(i)\eta(j)$ have no solution: multiplying the three
equations would give
$-1=\eta(0)^2\eta(1)^2\eta(2)^2=1$.
\end{remark}
\begin{remark}
\label{rem:underlying-signed-graph}
For a direction-compatible signed digraph, meaning that antiparallel arrows
have the same sign, forgetting orientations produces the underlying signed
graph $(U(D),\bar\sigma)$.  Here $U(D)$ has an undirected edge $\{i,j\}$
whenever $D$ contains at least one of the arrows $i\to j$ and $j\to i$, and
$\bar\sigma(\{i,j\})$ is the common sign of the arrows between $i$ and $j$
(or the sign of the unique such arrow when only one direction is present).
Applying Harary's criterion to each connected component therefore gives
\[
        (D,\sigma)\text{ is switching-balanced}
        \quad\Longleftrightarrow\quad
        (U(D),\bar\sigma)\text{ is balanced}.
\]
\end{remark}

\subsection{GLMY homology}\label{sec:glmy}

Let \(D=(V,A)\) be a finite loopless digraph, and write \(i\to j\) when
\((i,j)\in A\).  For \(p\ge0\), an elementary \(p\)-path is a sequence
\(i_0\cdots i_p\) of vertices.  The path is \emph{regular} when
consecutive vertices are distinct, and \emph{allowed} in \(D\) when
\[
        i_{r-1}\to i_r\qquad(1\le r\le p).
\]
Allowed paths are regular because \(D\) is loopless.  Nonconsecutive
vertices may repeat.

Let \(K\) be a commutative ring with identity.  Denote by
\(\mathcal R_p(D;K)\) the free \(K\)-module on all regular elementary
\(p\)-paths on \(V\), without imposing an arrow condition.  The allowed
path submodule of \(\mathcal R_p(D;K)\) is
\[
        \mathcal A_p(D;K)
        =\bigoplus_{i_0\cdots i_p\ \mathrm{allowed\ in}\ D}
          K e_{i_0\cdots i_p}.
\]
For \(p\ge1\), define the \(K\)-linear regular boundary
\[
        \partial_p:\mathcal R_p(D;K)\longrightarrow\mathcal R_{p-1}(D;K)
\]
on basis elements by
\[
        \partial_p e_{i_0\cdots i_p}
        =\sum_{q=0}^{p}(-1)^q
          e_{i_0\cdots\widehat{i_q}\cdots i_p},
\]
with every nonregular summand interpreted as zero.  In particular, deleting
an interior vertex \(i_q\) gives a zero term when \(i_{q-1}=i_{q+1}\).
Set \(\mathcal R_{-1}(D;K)=0\) and \(\partial_0=0\).  We write
\(\partial_*\) for the graded family \(\{\partial_p\}_{p\ge0}\).
The regular, unaugmented path boundaries satisfy
\[
        \partial_{p-1}\partial_p=0\qquad(p\ge1)
\]
\cite{GLMY2012,GLMY2020}.

The boundary of an allowed path need not be allowed: deleting an interior
vertex may produce consecutive vertices that are not joined by an arrow.
Define
\[
\begin{aligned}
        \Omega_0(D;K)&=\mathcal A_0(D;K),\\
        \Omega_p(D;K)&=\{u\in\mathcal A_p(D;K):
                    \partial_p u\in\mathcal A_{p-1}(D;K)\},\qquad p\ge1.
\end{aligned}
\]
For \(p\ge1\), write an allowed chain as
\(u=\sum_\gamma c_\gamma e_\gamma\), where the sum runs over allowed
elementary \(p\)-paths and \(c_\gamma\in K\).  The requirement
\(\partial_p u\in\mathcal A_{p-1}(D;K)\) means that, after collecting
equal basis paths in
\(\partial_p u=\sum_\gamma c_\gamma\partial_p e_\gamma\),
the total coefficient of each non-allowed regular \((p-1)\)-path is
zero.  The individual boundaries \(\partial_p e_\gamma\) need not be
allowed: contributions to the same non-allowed basis path can cancel
in the sum.

For \(u\in\Omega_1(D;K)\), the boundary \(\partial_1u\) belongs to
\(\Omega_0(D;K)=\mathcal A_0(D;K)\).  For \(p\ge2\) and
\(u\in\Omega_p(D;K)\), the chain \(\partial_pu\) is allowed and
\(\partial_{p-1}\partial_pu=0\in\mathcal A_{p-2}(D;K)\).  Hence
\(\partial_pu\in\Omega_{p-1}(D;K)\), and the restricted maps
\(\partial_p:\Omega_p(D;K)\to\Omega_{p-1}(D;K)\) form a chain complex.
We continue to denote the graded family of these restrictions by
\(\partial_*\).
With \(\Omega_{-1}(D;K)=0\), the GLMY homology groups are
\[
        H_p(D;K)
        :=\frac{\ker\bigl(\partial_p:\Omega_p(D;K)\to\Omega_{p-1}(D;K)\bigr)}
        {\operatorname{im}\bigl(\partial_{p+1}:\Omega_{p+1}(D;K)\to\Omega_p(D;K)\bigr)},
        \qquad p\ge0.
\]
For real coefficients we write \(\mathcal A_p(D)\), \(\Omega_p(D)\) and
\(H_p(D)\), and set \(\beta_p(D)=\dim_{\mathbb R}H_p(D)\).

For an undirected graph \(G=(V,E)\), replace every edge \(\{i,j\}\) by
both arrows \(i\to j\) and \(j\to i\), obtaining the bidirected digraph
\(\overleftrightarrow G\).  Define
\[
        H_p(G;K):=H_p(\overleftrightarrow G;K).
\]

\section{Signed digraph GLMY homology and sensitivity to orientation and signs}\label{sec:signed-digraphs}

Throughout this section, \((D,\sigma)\) denotes a signed digraph in the sense
of \cref{def:signed-digraph}.

\begin{definition}\label{def:signed-double-cover}
Let \((D,\sigma)\) be a signed digraph.  The signed double cover
\(\widetilde D_\sigma\) is the digraph with vertices
\(i^\varepsilon=(i,\varepsilon)\in V(D)\times\Signs\) and arrows
\[
        i^\varepsilon\to j^{\varepsilon\sigma(ij)}
        \qquad (i\to j\in A(D),\ \varepsilon\in\Signs).
\]
The deck involution is \(\tau(i^\varepsilon)=i^{-\varepsilon}\), so
\(\tau^2=\operatorname{id}\).  The covering projection is
\[
        \pi:\widetilde D_\sigma\longrightarrow D,
        \qquad \pi(i^\varepsilon)=i,
        \qquad \pi\circ\tau=\pi.
\]
\end{definition}

\begin{proposition}[Splitting criterion for the signed double cover]
\label{prop:double-cover-splitting}
Let $D_0$ be a weakly connected component of $D$.  The restriction of
$(D,\sigma)$ to $D_0$ is switching-balanced if and only if the signed double
cover of this restricted signed digraph is isomorphic over $D_0$ to
\[
        D_0^+\sqcup D_0^-,
\]
where $D_0^+$ and $D_0^-$ are two copies of $D_0$, each mapped
isomorphically onto $D_0$ by the covering projection.  If the restriction of $(D,\sigma)$ to $D_0$ is not switching-balanced,
then the signed double cover of this restriction is weakly connected.
\end{proposition}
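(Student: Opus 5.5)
We prove both directions and the connectivity dichotomy by working directly with the lifting rule \(i^\varepsilon\to j^{\varepsilon\sigma(ij)}\) of \cref{def:signed-double-cover}, restricted to \(D_0\).

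\emph{Switching-balanced implies splitting.} Suppose \(\sigma(ij)=\eta(i)\eta(j)\) for every arrow \(i\to j\) of \(D_0\). Define
\[
        D_0^{\pm}=\{\,i^{\pm\eta(i)}: i\in V(D_0)\,\}
\]
with the induced arrows. An arrow \(i\to j\) lifts to \(i^{\varepsilon}\to j^{\varepsilon\eta(i)\eta(j)}\). If \(\varepsilon=\pm\eta(i)\), the target is \(j^{\pm\eta(j)}\), so every lifted arrow stays inside one of the two vertex sets. The maps \(i\mapsto i^{\pm\eta(i)}\) are then digraph isomorphisms \(D_0\to D_0^\pm\) inverse to \(\pi\), and they give the isomorphism over \(D_0\). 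As a side remark, \(\tau\) exchanges the two copies.

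\emph{Splitting implies switching-balanced.} Suppose we have an isomorphism over \(D_0\) with \(D_0^+\sqcup D_0^-\), each copy mapped isomorphically onto \(D_0\) by \(\pi\). Each fibre \(\{i^+,i^-\}\) then meets \(D_0^+\) in exactly one vertex, say \(i^{\eta(i)}\). Take any arrow \(i\to j\) in \(D_0\). Its unique lift starting at \(i^{\eta(i)}\) is \(i^{\eta(i)}\to j^{\eta(i)\sigma(ij)}\). Because \(D_0^+\) is a union of components of the cover and maps isomorphically onto \(D_0\), this lift lies in \(D_0^+\). Hence \(\eta(i)\sigma(ij)=\eta(j)\), that is, \(\sigma(ij)=\eta(i)\eta(j)\).

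The delicate point here is making precise that the arrow of \(D_0^+\) over \(i\to j\) really is the lift from \(i^{\eta(i)}\). Two facts settle this. Arrows of the cover over \(i\to j\) are exactly the two lifts, one from each sheet. And an isomorphism onto \(D_0\) must hit the arrow \(i\to j\) by some arrow of \(D_0^+\), whose source is necessarily \(i^{\eta(i)}\).

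\emph{Non-balanced implies connected.} This is the main step. Since \(D_0\) is weakly connected, every vertex of the cover is weakly joined to \(i^+\) or to \(i^-\) for a fixed base vertex \(i\). This uses lifting of undirected walks: an arrow traversed backwards also lifts uniquely, because \(j^{\delta}\) is the target of exactly one lift, namely from \(i^{\delta\sigma(ij)}\). So the cover has at most two weak components, and it suffices to join \(i^+\) to \(i^-\).

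Suppose instead that \(i^+\) and \(i^-\) lie in different components. Define \(\eta(j)=\varepsilon\) if \(j^\varepsilon\) lies in the component of \(i^+\). This is well defined for two reasons. By the observation above, \(j^+\) or \(j^-\) lies in that component. Not both can: \(\tau\) is a digraph automorphism and maps the component of \(i^+\) onto the component of \(i^-\), so if both lay in it, the two components would meet. The arrow lift \(j^{\eta(j)}\to k^{\eta(j)\sigma(jk)}\) stays in the component of \(i^+\), which forces \(\eta(k)=\eta(j)\sigma(jk)\). Thus \((D,\sigma)\) restricted to \(D_0\) is switching-balanced, which is a contradiction.

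An equivalent approach uses the holonomy of a closed undirected walk, with backward traversals contributing \(\sigma\) as well. Non-balance gives a closed walk with sign \(-1\), whose lift joins \(i^+\) to \(i^-\). The argument by contradiction avoids having to bookkeep walk signs, so I will present that version.
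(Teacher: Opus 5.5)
Your proof is correct and follows the paper's argument. The forward direction uses the same sheet sets $\{i^{\pm\eta(i)}\}$, the converse reads $\eta$ off one copy, and the connectivity part uses walk lifting plus the deck involution. The one difference is cosmetic: the paper takes an arbitrary component $C$ and gets its contradiction by showing that $C\sqcup\tau C$ would be a splitting, then invoking the converse direction, whereas you fix $i^\pm$ and read the switching function directly off the component of $i^+$.
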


\begin{proof}
Assume first that the restriction of $(D,\sigma)$ to $D_0$ is
switching-balanced.  Choose $\eta:V(D_0)\to\Signs$ such that
$\sigma(ij)=\eta(i)\eta(j)$ for every arrow $i\to j$ of $D_0$, and set
\[
    V_+:=\{i^{\eta(i)}:i\in V(D_0)\},
    \qquad
    V_-:=\{i^{-\eta(i)}:i\in V(D_0)\}.
\]
If $i\to j$ is an arrow of $D_0$, then
$\eta(i)\sigma(ij)=\eta(j)$.  Hence its two lifts are
\[
    i^{\eta(i)}\to j^{\eta(j)},
    \qquad
    i^{-\eta(i)}\to j^{-\eta(j)}.
\]
Thus no lifted arrow joins $V_+$ to $V_-$, and the induced subdigraphs on
$V_+$ and $V_-$ are both mapped isomorphically onto $D_0$ by the covering
projection.  The signed double cover therefore splits as
$D_0^+\sqcup D_0^-$.

Conversely, suppose that the signed double cover of the restriction to
$D_0$ splits over $D_0$ as two copies of $D_0$.  Choose one copy $C$.  For
each $i\in V(D_0)$ there is a unique $\eta(i)\in\Signs$ such that
$i^{\eta(i)}\in C$.  If $i\to j$ is an arrow of $D_0$, the lift contained in
$C$ has the form
\[
        i^{\eta(i)}\to j^{\eta(i)\sigma(ij)}.
\]
Its terminal vertex is also $j^{\eta(j)}$, so
$\eta(j)=\eta(i)\sigma(ij)$ and hence
$\sigma(ij)=\eta(i)\eta(j)$.  Thus the restriction to $D_0$ is
switching-balanced.

Finally, assume that the restriction to $D_0$ is not switching-balanced, and
let $C$ be a weakly connected component of the signed double cover of this
restriction.  Choose a
lift $v^\varepsilon\in C$ of some vertex $v\in V(D_0)$.  For any
$w\in V(D_0)$, choose a walk from $v$ to $w$ in the underlying undirected
graph of $D_0$.  Once the initial lift $v^\varepsilon$ is fixed, this walk
lifts uniquely in the underlying undirected graph of the double cover.
Hence $C$ contains at least one of the two lifts $w^+$ and $w^-$ of every
vertex $w$.

Suppose first that $C$ contains exactly one lift of each vertex of $D_0$.
Then $\tau C$ contains the other lift of each vertex, and $C$ and $\tau C$
are disjoint.  The covering projection restricts to an isomorphism from each
of these two components onto $D_0$, producing the splitting described in the
first part of the proposition.  This would imply that the restriction to
$D_0$ is switching-balanced, a contradiction.  Therefore $C$ contains both
lifts $v^+$ and $v^-$ of some vertex $v$.

Now let $w\in V(D_0)$ be arbitrary and choose an underlying walk from $v$
to $w$.  Lift this same walk once from $v^+$ and once from $v^-$.  The two
lifted walks are exchanged by the deck involution $\tau$; consequently,
if the first ends at $w^\delta$, the second ends at $w^{-\delta}$.  Both
lifted walks lie in $C$, so both $w^+$ and $w^-$ belong to $C$.  Since $w$
was arbitrary, $C$ contains both lifts of every vertex of $D_0$ and hence is
the entire signed double cover.  Therefore the signed double cover is weakly
connected.
\end{proof}

The deck involution changes every sheet label of an elementary path:
\[
        \tau\bigl(e_{v_0^{\varepsilon_0}\cdots v_p^{\varepsilon_p}}\bigr)
        =
        e_{v_0^{-\varepsilon_0}\cdots v_p^{-\varepsilon_p}},
\]
and extends linearly to regular path spaces.  The operator $\tau$
preserves arrows and commutes with deletion, so $\tau$ restricts to the
allowed path spaces and GLMY chain spaces and induces an involution on
homology.  We use $\tau$ for the induced operators as well.  For a real
vector space $C$ with involution $\tau$, the anti-invariant subspace is
\[
        C^-:=\ker(\tau+\operatorname{id}_C)
        =\{x\in C:\tau x=-x\}.
\]

\begin{definition}\label{def:signed-path-boundary}
A signed regular $p$-path over the lifted vertex set \(V(D)\times\Signs\) is a sequence
\[
        \widetilde\gamma=
        i_0^{\varepsilon_0}i_1^{\varepsilon_1}\cdots i_p^{\varepsilon_p},
        \qquad i_r\in V(D),\quad \varepsilon_r\in\Signs,
\]
such that the adjacent lifted vertices are distinct:
\[
        (i_{r-1},\varepsilon_{r-1})\neq (i_r,\varepsilon_r)
        \qquad (1\le r\le p).
\]
Thus \(u^+u^-\) and \(u^-u^+\) are regular, whereas \(u^+u^+\) and
\(u^-u^-\) are not.  Define
\(\mathcal R_p(\widetilde D_\sigma)\) to be the real vector space freely
generated by all such regular lifted \(p\)-paths; no arrow condition is imposed
in \(\mathcal R_p(\widetilde D_\sigma)\).  Define
\[
R_p^\sigma(D)
:=
\frac{\mathcal R_p(\widetilde D_\sigma)}
     {\displaystyle
      \bigl\langle
      e_{\tau\widetilde\gamma}+e_{\widetilde\gamma}:
      \widetilde\gamma\ \mathrm{regular}
      \bigr\rangle}.
\]
Write $[e_{\widetilde\gamma}]$ for the class of
$e_{\widetilde\gamma}$ in $R_p^\sigma(D)$.  Thus
$[e_{\tau\widetilde\gamma}]=-[e_{\widetilde\gamma}]$.
For $p\ge1$, define
\[
        \partial_{\mathrm{reg},p}^\sigma:
        R_p^\sigma(D)\longrightarrow R_{p-1}^\sigma(D)
\]
by
\[
        \partial_{\mathrm{reg},p}^\sigma
        [e_{i_0^{\varepsilon_0}\cdots i_p^{\varepsilon_p}}]
        =
        \sum_{q=0}^p
        (-1)^q
        [e_{i_0^{\varepsilon_0}\cdots
        \widehat{i_q^{\varepsilon_q}}\cdots
        i_p^{\varepsilon_p}}].
\]
Set \(R_{-1}^\sigma(D)=0\) and
\(\partial_{\mathrm{reg},0}^\sigma=0\).  We write
\(\partial_{\mathrm{reg},*}^\sigma\) for the graded family
\(\{\partial_{\mathrm{reg},p}^\sigma\}_{p\ge0}\).
A deletion term is zero when the remaining path has two adjacent
identical lifted vertices.  Equivalently, one first quotients the full
lifted path module by the nonregular paths and then imposes the deck
relation \([e_{\tau\widetilde\gamma}]=-[e_{\widetilde\gamma}]\).
The regular boundary on $\mathcal R_\bullet(\widetilde D_\sigma)$ commutes
with $\tau$ and therefore induces
\(\partial_{\mathrm{reg},*}^\sigma\) on $R_\bullet^\sigma(D)$, with
\[
        \partial_{\mathrm{reg},p-1}^\sigma
        \partial_{\mathrm{reg},p}^\sigma=0
        \qquad(p\ge1).
\]

A signed regular path $\widetilde\gamma$ is allowed if
\[
        i_{r-1}\to i_r\in A(D),
        \qquad
        \varepsilon_r=\varepsilon_{r-1}\sigma(i_{r-1}i_r)
        \quad (1\le r\le p).
\]
Let $\mathcal A_p^\sigma(D)\subseteq R_p^\sigma(D)$ be the linear subspace
generated by the classes \([e_{\widetilde\gamma}]\) of allowed signed regular
\(p\)-paths.
\end{definition}

For an allowed base $p$-path
\[
        \gamma=i_0i_1\cdots i_p,
\]
set
\[
        s_r=\prod_{t=0}^{r-1}\sigma(i_t i_{t+1})
        \qquad (1\le r\le p).
\]
By the definition of the signed double cover, for each
$\varepsilon\in\Signs$ the unique lift of $\gamma$ beginning at
$i_0^\varepsilon$ is
\[
        \widetilde\gamma^\varepsilon
        =i_0^\varepsilon i_1^{\varepsilon s_1}\cdots
        i_p^{\varepsilon s_p}.
\]
Thus the two lifts are
\[
        \widetilde\gamma^+
        =i_0^+i_1^{s_1}\cdots i_p^{s_p},
        \qquad
        \widetilde\gamma^-
        =i_0^-i_1^{-s_1}\cdots i_p^{-s_p}
        =\tau\widetilde\gamma^+,
\]
and every allowed lifted $p$-path arises uniquely in this way.

For an arbitrary regular lifted path $\widetilde\delta$, define
\[
\begin{aligned}
        \Lambda_p:R_p^\sigma(D)
        &\longrightarrow \mathcal R_p(\widetilde D_\sigma)^-,\\
        [e_{\widetilde\delta}]
        &\longmapsto
        e_{\widetilde\delta}-e_{\tau\widetilde\delta}.
\end{aligned}
\]
The deck relation
$[e_{\tau\widetilde\delta}]=-[e_{\widetilde\delta}]$ makes
$\Lambda_p$ a linear isomorphism.  Its inverse is
\[
        \Lambda_p^{-1}(z)=\tfrac12[z],
        \qquad z\in\mathcal R_p(\widetilde D_\sigma)^-,
\]
where $[z]$ denotes the image of $z$ in $R_p^\sigma(D)$.

\begin{proposition}[Anti-invariant realization]
\label{prop:anti-invariant-realization}
For every $p\ge0$,
\[
        \Lambda_p\bigl(\mathcal A_p^\sigma(D)\bigr)
        =\mathcal A_p(\widetilde D_\sigma)^-.
\]
Moreover, for $p\ge1$,
\[
        \Lambda_{p-1}\partial_{\mathrm{reg},p}^\sigma
        =\partial_p\Lambda_p
        \qquad\text{on }R_p^\sigma(D).
\]
\end{proposition}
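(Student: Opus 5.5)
The plan is to verify both claims directly on basis elements. We use that $\Lambda_p$ is a linear isomorphism, and that $\tau$ commutes with the regular boundary and preserves allowed lifted paths.

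For the first identity, we first describe $\mathcal A_p(\widetilde D_\sigma)$. A lifted regular path is allowed in $\widetilde D_\sigma$ exactly when each step $i_{r-1}^{\varepsilon_{r-1}}\to i_r^{\varepsilon_r}$ is an arrow of the cover. By \cref{def:signed-double-cover}, this means $i_{r-1}\to i_r\in A(D)$ and $\varepsilon_r=\varepsilon_{r-1}\sigma(i_{r-1}i_r)$. This is precisely the allowedness condition of \cref{def:signed-path-boundary}. Hence the allowed lifted paths are exactly the lifts $\widetilde\gamma^\pm$ of allowed base paths, and $\tau$ permutes them without fixed points, since $\tau$ changes every sheet label.

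It follows that $\mathcal A_p(\widetilde D_\sigma)$ has a $\tau$-stable basis, partitioned into free orbits $\{\widetilde\gamma^+,\widetilde\gamma^-\}$. Its anti-invariant part is therefore spanned by the vectors $e_{\widetilde\gamma^+}-e_{\widetilde\gamma^-}$. To justify this, take $x$ with $\tau x=-x$, write $x=\tfrac12(x-\tau x)$, and expand in the basis. Since $\Lambda_p[e_{\widetilde\gamma^+}]=e_{\widetilde\gamma^+}-e_{\widetilde\gamma^-}$ and $\Lambda_p[e_{\widetilde\gamma^-}]=-(e_{\widetilde\gamma^+}-e_{\widetilde\gamma^-})$, the image of the generators of $\mathcal A_p^\sigma(D)$ is exactly this spanning set. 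This gives the equality for every $p\ge0$; the case $p=0$ needs no separate treatment.

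For the chain-map identity, evaluate both sides on a generator $[e_{\widetilde\delta}]$ with $\widetilde\delta$ an arbitrary regular lifted path. We have
\[
        \partial_p\Lambda_p[e_{\widetilde\delta}]
        =\partial_pe_{\widetilde\delta}-\partial_pe_{\tau\widetilde\delta}
        =\partial_pe_{\widetilde\delta}-\tau\partial_pe_{\widetilde\delta},
\]
because deleting the $q$-th vertex commutes with flipping all signs, and regularity is preserved under $\tau$. On the other side, $\Lambda_{p-1}$ extends to the linear map $[z]\mapsto z-\tau z$, and applying it to $\partial^\sigma_{\mathrm{reg},p}[e_{\widetilde\delta}]=[\partial_pe_{\widetilde\delta}]$ gives the same vector.

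The main point needing care is well-definedness. The formula $[z]\mapsto z-\tau z$ on all of $\mathcal R_{p-1}(\widetilde D_\sigma)$ kills $e_{\tau\widetilde\gamma}+e_{\widetilde\gamma}$, so it descends to the quotient and agrees with $\Lambda_{p-1}$ on generators. We also need that nonregular deletion terms are zero on both sides consistently. This holds because a deleted face is nonregular if and only if its $\tau$-image is nonregular.
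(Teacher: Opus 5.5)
Your proof is correct and follows the same route as the paper. You identify the allowed lifted paths with the two deck-exchanged lifts of each allowed base path, and you check the boundary identity on a generator $[e_{\widetilde\delta}]$ using that vertex deletion, and regularity of the deleted faces, commute with $\tau$. Your extra details (the spanning argument via $x=\tfrac12(x-\tau x)$ and the check that $z\mapsto z-\tau z$ kills the deck relations) make explicit what the paper's terser proof leaves implicit.
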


\begin{proof}
Every allowed base path has exactly the two lifts $\widetilde\gamma^+$ and
$\widetilde\gamma^-$ defined immediately before the proposition, and the
deck involution exchanges them.  Conversely, every allowed lifted path is one
of these two lifts.  Hence $\Lambda_p$ identifies
$\mathcal A_p^\sigma(D)$ with
$\mathcal A_p(\widetilde D_\sigma)^-$.

For a regular lifted path $\widetilde\delta$, deletion of a vertex commutes
with $\tau$.  Therefore
\[
\begin{aligned}
\partial_p\Lambda_p[e_{\widetilde\delta}]
&=\partial_p\bigl(e_{\widetilde\delta}-e_{\tau\widetilde\delta}\bigr)\\
&=\Lambda_{p-1}\partial_{\mathrm{reg},p}^\sigma
  [e_{\widetilde\delta}],
\end{aligned}
\]
with the usual convention that a nonregular deleted face is zero.  This proves
the boundary identity.
\end{proof}

\begin{definition}\label{def:signed-glmy-complex}
Define
\[
        \Omega_0^\sigma(D)=\mathcal A_0^\sigma(D),
        \qquad
        \Omega_p^\sigma(D)
        =
        \{z\in \mathcal A_p^\sigma(D):
        \partial_{\mathrm{reg},p}^\sigma z
        \in \mathcal A_{p-1}^\sigma(D)\},
        \quad p\ge1.
\]
For $p\ge1$, define
\[
        \partial_p^\sigma:
        \Omega_p^\sigma(D)\longrightarrow\Omega_{p-1}^\sigma(D),
        \qquad
        \partial_p^\sigma z
        :=\partial_{\mathrm{reg},p}^\sigma z,
\]
and set \(\partial_0^\sigma=0\).  We write
\(\partial_*^\sigma\) for the graded signed GLMY boundary when no
single degree is being emphasized.  Thus $\Omega_p^\sigma(D)$ consists
exactly of the allowed signed $p$-chains whose alternating-deletion
boundary is again allowed.
\end{definition}

\begin{remark}\label{rem:sheet-labelled-boundary}
The signed formal boundary distinguishes faces with the same base vertices
but different relative sheet labels.  In particular, a deleted face need
not be a scalar multiple of the allowed lift of the corresponding shortcut.
Suppose that $i\to j\to k$ and $i\to k$ are arrows but
\[
        \sigma(ik)\neq\sigma(ij)\sigma(jk).
\]
Deleting $j$ from the lifted two-step path gives a face with relative
sheet change $\sigma(ij)\sigma(jk)$, whereas the allowed lifted shortcut
has relative sheet change $\sigma(ik)$.  The inherited face and the
allowed shortcut therefore belong to different deck orbits and represent
linearly independent classes in $R_1^\sigma(D)$.  More generally, for
$p\ge1$, choose one representative from each deck orbit of regular
$(p-1)$-paths.  An
allowed signed chain $z$ belongs to $\Omega_p^\sigma(D)$ precisely when
the coefficient of every non-allowed basis class in
$\partial_{\mathrm{reg},p}^\sigma z$ is zero.

Consider the transitive triangle
\[
        0\to1,\qquad 1\to2,\qquad 0\to2,
\]
with
\[
        \sigma(01)=+1,\qquad \sigma(12)=+1,\qquad \sigma(02)=-1.
\]
The signed triangle and its double cover are shown in
\cref{fig:signed-triangle-lift-obstruction}.  Deleting the middle vertex
of the allowed path $0^+\to1^+\to2^+$ gives $[e_{0^+2^+}]$.
The class $[e_{0^+2^+}]$ does not belong to $\mathcal A_1^\sigma(D)$:
the lift of $0\to2$ starting at $0^+$ ends at $2^-$ because
$\sigma(02)=-1$.

\begin{figure}[H]
\centering
\begin{tikzpicture}[>=stealth,scale=1.0,
    vtx/.style={circle,fill=black,inner sep=1.8pt},
    posedge/.style={->,blue,thick},
    negedge/.style={->,red,thick}]
\begin{scope}[xshift=0cm]
  \node at (0,2.10) {signed digraph $(D,\sigma)$};
  \node[vtx] (b0) at (-1.7,-.25) {};
  \node[vtx] (b1) at (0,1.15) {};
  \node[vtx] (b2) at (1.7,-.25) {};
  \node[left=2pt]  at (b0) {$0$};
  \node[above=2pt] at (b1) {$1$};
  \node[right=2pt] at (b2) {$2$};
  \draw[posedge] (b0)--(b1);
  \draw[posedge] (b1)--(b2);
  \draw[negedge] (b0)--(b2);
\end{scope}
\begin{scope}[xshift=7.0cm]
  \node at (0,2.10) {signed double cover $\widetilde D_\sigma$};
  \node[vtx] (p0) at (-2.15,.55) {};
  \node[vtx] (p1) at (0,1.25) {};
  \node[vtx] (p2) at (2.15,.55) {};
  \node[vtx] (m0) at (-2.15,-.65) {};
  \node[vtx] (m1) at (0,-1.35) {};
  \node[vtx] (m2) at (2.15,-.65) {};
  \node[left=2pt]  at (p0) {$0^+$};
  \node[above=2pt] at (p1) {$1^+$};
  \node[right=2pt] at (p2) {$2^+$};
  \node[left=2pt]  at (m0) {$0^-$};
  \node[below=2pt] at (m1) {$1^-$};
  \node[right=2pt] at (m2) {$2^-$};
  \draw[posedge] (p0)--(p1);
  \draw[posedge] (p1)--(p2);
  \draw[posedge] (m0)--(m1);
  \draw[posedge] (m1)--(m2);
  \draw[negedge] (p0)--(m2);
  \draw[negedge] (m0)--(p2);
\end{scope}
\end{tikzpicture}
\caption{The signed transitive triangle and its signed double cover.  Blue
arrows project to positive arrows of $D$, while red arrows project to the
negative shortcut $0\to2$.}
\label{fig:signed-triangle-lift-obstruction}
\end{figure}
Then
\[
\begin{aligned}
        \mathcal A_0^\sigma(D)
        &=\bigl\langle [e_{0^+}],[e_{1^+}],[e_{2^+}]\bigr\rangle,\\
        \mathcal A_1^\sigma(D)
        &=\bigl\langle [e_{0^+1^+}],[e_{1^+2^+}],[e_{0^+2^-}]\bigr\rangle,\\
        \mathcal A_2^\sigma(D)
        &=\bigl\langle [e_{0^+1^+2^+}]\bigr\rangle.
\end{aligned}
\]
The displayed bases use lifts starting in the positive sheet.  Their
deck-conjugate lifts represent the negatives of these classes; for example,
$[e_{0^-2^+}]=-[e_{0^+2^-}]$.
For the signed triangle, $\Omega_0^\sigma(D)=\mathcal A_0^\sigma(D)$
by definition, and the boundary of every allowed signed arrow belongs
to $\mathcal A_0^\sigma(D)$.  Hence
\[
        \Omega_0^\sigma(D)=\mathcal A_0^\sigma(D),
        \qquad
        \Omega_1^\sigma(D)=\mathcal A_1^\sigma(D).
\]
Indeed,
\[
\begin{aligned}
        \partial_1^\sigma[e_{0^+1^+}]
        &=[e_{1^+}]-[e_{0^+}],\\
        \partial_1^\sigma[e_{1^+2^+}]
        &=[e_{2^+}]-[e_{1^+}],\\
        \partial_1^\sigma[e_{0^+2^-}]
        &=[e_{2^-}]-[e_{0^+}]
          =-[e_{2^+}]-[e_{0^+}].
\end{aligned}
\]
However, the boundary of the sole degree-two generator is
\[
        \partial_{\mathrm{reg},2}^\sigma[e_{0^+1^+2^+}]
        =
        [e_{1^+2^+}]-[e_{0^+2^+}]+[e_{0^+1^+}].
\]
The first and third classes belong to $\mathcal A_1^\sigma(D)$, but the middle
class $[e_{0^+2^+}]$ does not: the arrow $0\to2$ is negative, so its allowed
lift represents $[e_{0^+2^-}]$.  Hence
\[
        \partial_{\mathrm{reg},2}^\sigma[e_{0^+1^+2^+}]
        \notin \mathcal A_1^\sigma(D),
        \qquad
        [e_{0^+1^+2^+}]\notin\Omega_2^\sigma(D).
\]
Since \(\mathcal A_2^\sigma(D)\) is one-dimensional and its generator
has a nonzero boundary coefficient at the non-allowed class
$[e_{0^+2^+}]$, the boundary-preserving subspace is
\[
        \Omega_2^\sigma(D)=0.
\]
\end{remark}

\begin{proposition}\label{prop:signed-glmy-complex}
For every signed digraph $(D,\sigma)$, the maps
\[
        \partial_p^\sigma:\Omega_p^\sigma(D)\to\Omega_{p-1}^\sigma(D),
        \qquad p\ge1,
\]
satisfy
\[
        \partial_{p-1}^\sigma\partial_p^\sigma=0
        \qquad(p\ge1).
\]
Thus $(\Omega_\bullet^\sigma(D),\partial_*^\sigma)$ is a chain complex.
\end{proposition}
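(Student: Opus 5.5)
The plan has two parts: first check that each \(\partial_p^\sigma\) really lands in \(\Omega_{p-1}^\sigma(D)\), and then check that composing two of them gives zero. The second part is inherited from the regular boundary on \(R_\bullet^\sigma(D)\), and the first mirrors the ordinary GLMY argument recalled in \cref{sec:glmy}.

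First, well-definedness. Let \(z\in\Omega_p^\sigma(D)\) with \(p\ge1\).

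\begin{itemize}
\item For \(p=1\), \(\partial_{\mathrm{reg},1}^\sigma z\) lies in \(\mathcal A_0^\sigma(D)\) by definition of \(\Omega_1^\sigma(D)\). Since \(\Omega_0^\sigma(D)=\mathcal A_0^\sigma(D)\), this is all that is needed. One can also see directly that every lifted vertex \(i^\varepsilon\) is allowed.
\item For \(p\ge2\), \(\partial_{\mathrm{reg},p}^\sigma z\in\mathcal A_{p-1}^\sigma(D)\) by hypothesis. Its own regular boundary is \(\partial_{\mathrm{reg},p-1}^\sigma\partial_{\mathrm{reg},p}^\sigma z\). This is \(0\), which lies in \(\mathcal A_{p-2}^\sigma(D)\). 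Hence \(\partial_{\mathrm{reg},p}^\sigma z\in\Omega_{p-1}^\sigma(D)\).
\end{itemize}

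Once the maps are well defined, \(\partial_{p-1}^\sigma\partial_p^\sigma z=\partial_{\mathrm{reg},p-1}^\sigma\partial_{\mathrm{reg},p}^\sigma z=0\) for \(p\ge2\). For \(p=1\) the identity holds because \(\partial_0^\sigma=0\).

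The one ingredient that needs justification is \(\partial_{\mathrm{reg},p-1}^\sigma\partial_{\mathrm{reg},p}^\sigma=0\) on \(R_\bullet^\sigma(D)\). \cref{def:signed-path-boundary} asserts this, and I will make it explicit via \cref{prop:anti-invariant-realization}. Each \(\Lambda_p\) is a linear isomorphism, and the intertwining identity \(\Lambda_{p-1}\partial_{\mathrm{reg},p}^\sigma=\partial_p\Lambda_p\) holds. Together these give
\[
\Lambda_{p-2}\,\partial_{\mathrm{reg},p-1}^\sigma\partial_{\mathrm{reg},p}^\sigma
=\partial_{p-1}\partial_p\Lambda_p=0 .
\]
The last equality is the ordinary identity \(\partial_{p-1}\partial_p=0\) for the regular boundary on \(\mathcal R_\bullet(\widetilde D_\sigma)\) \cite{GLMY2012,GLMY2020}. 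Here \(\widetilde D_\sigma\) is a loopless digraph on the vertex set \(V(D)\times\Signs\), and its regular paths are exactly the signed regular paths of \cref{def:signed-path-boundary}. Injectivity of \(\Lambda_{p-2}\) then gives the claim.

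The main point of care is that quotienting by the deck relation is compatible with deleting faces, including faces that become nonregular and are set to zero. This reduces to two facts. The ideal spanned by \(e_{\tau\widetilde\gamma}+e_{\widetilde\gamma}\) is mapped into itself by \(\partial\), because \(\tau\) commutes with deletion. And a face of \(\widetilde\gamma\) is nonregular exactly when the corresponding face of \(\tau\widetilde\gamma\) is nonregular.
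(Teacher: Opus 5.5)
Your proof is correct and follows the paper's argument. You show that $\partial_{\mathrm{reg},p}^\sigma z$ is allowed and has zero (hence allowed) boundary, so it lies in $\Omega_{p-1}^\sigma(D)$, and you then inherit $\partial\partial=0$ from the ambient regular boundary; the one difference is that you derive $\partial_{\mathrm{reg},p-1}^\sigma\partial_{\mathrm{reg},p}^\sigma=0$ through the isomorphisms $\Lambda_\bullet$ of \cref{prop:anti-invariant-realization}, whereas the paper takes it from \cref{def:signed-path-boundary}, where it holds because the cover's regular boundary commutes with $\tau$ and therefore descends to the quotient.
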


\begin{proof}
If $z\in\Omega_1^\sigma(D)$, then
$\partial_1^\sigma z\in\mathcal A_0^\sigma(D)=\Omega_0^\sigma(D)$.
For $p\ge2$ and $z\in\Omega_p^\sigma(D)$, the chain
$\partial_p^\sigma z=\partial_{\mathrm{reg},p}^\sigma z$ belongs to
$\mathcal A_{p-1}^\sigma(D)$.  Because $\partial_p^\sigma z\in\mathcal A_{p-1}^\sigma(D)$ and
\[
\partial_{\mathrm{reg},p-1}^\sigma
\bigl(\partial_p^\sigma z\bigr)
=\partial_{\mathrm{reg},p-1}^\sigma
\partial_{\mathrm{reg},p}^\sigma z=0,
\]
the chain $\partial_p^\sigma z$ belongs to $\Omega_{p-1}^\sigma(D)$.
On $\Omega_{p-1}^\sigma(D)$ the restricted boundary agrees with the ambient
regular boundary, and therefore
$\partial_{p-1}^\sigma\partial_p^\sigma z=0$.
\end{proof}

\begin{proposition}\label{prop:degree-one-incidence}
Let $i\to j$ be an arrow of a signed digraph $(D,\sigma)$.  The allowed
lift beginning at $i^+$ determines the class
$[e_{i^+j^{\sigma(ij)}}]\in\mathcal A_1^\sigma(D)$, and
\[
        \partial_1^\sigma[e_{i^+j^{\sigma(ij)}}]
        =
        \sigma(ij)[e_{j^+}]-[e_{i^+}].
\]
Thus, with respect to the vertex basis $\{[e_{v^+}]:v\in V(D)\}$, the
degree-one signed GLMY boundary is the twisted incidence boundary.
\end{proposition}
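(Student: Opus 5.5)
The plan is to compute directly from \cref{def:signed-path-boundary} and \cref{def:signed-glmy-complex}. First I will confirm that the lifted path $i^+j^{\sigma(ij)}$ is an allowed signed regular $1$-path. It is regular because $i\neq j$, since $D$ is loopless. It satisfies the allowed condition $\varepsilon_1=\varepsilon_0\sigma(ij)$ with $\varepsilon_0=+1$. Equivalently, it is the lift $\widetilde\gamma^+$ of the base path $\gamma=ij$, with $s_1=\sigma(ij)$. Hence its class lies in $\mathcal A_1^\sigma(D)$.

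Next I will check that this class lies in $\Omega_1^\sigma(D)$, so that $\partial_1^\sigma$ is defined on it. The two deletion faces are the $0$-paths $j^{\sigma(ij)}$ and $i^+$. Every lifted $0$-path is regular and allowed, since the allowed condition is vacuous. So $\partial_{\mathrm{reg},1}^\sigma$ of any allowed $1$-chain lies in $\mathcal A_0^\sigma(D)$. This gives $\Omega_1^\sigma(D)=\mathcal A_1^\sigma(D)$, as already observed for the triangle in \cref{rem:sheet-labelled-boundary}.

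The boundary formula then reads
\[
\partial_1^\sigma[e_{i^+j^{\sigma(ij)}}]=[e_{j^{\sigma(ij)}}]-[e_{i^+}].
\]
To rewrite $[e_{j^{\sigma(ij)}}]$ in the vertex basis, I split into two cases. If $\sigma(ij)=+1$, it is $[e_{j^+}]$. If $\sigma(ij)=-1$, the deck relation $[e_{\tau\widetilde\gamma}]=-[e_{\widetilde\gamma}]$ gives $[e_{j^-}]=-[e_{j^+}]$. In both cases $[e_{j^{\sigma(ij)}}]=\sigma(ij)[e_{j^+}]$, which yields the stated formula.

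For the final sentence, I still need that $\{[e_{v^+}]\}$ is a basis of $\mathcal A_0^\sigma(D)=R_0^\sigma(D)$. For this I will use $\Lambda_0$: it sends $[e_{v^+}]$ to $e_{v^+}-e_{v^-}$, and these vectors form a basis of $\mathcal R_0(\widetilde D_\sigma)^-$. Similarly, the classes $[e_{i^+j^{\sigma(ij)}}]$, one for each arrow, form a basis of $\mathcal A_1^\sigma(D)$ by \cref{prop:anti-invariant-realization}. In these bases the matrix of $\partial_1^\sigma$ has column $\sigma(ij)e_j-e_i$ for the arrow $i\to j$, which is the twisted incidence matrix. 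No step is a serious obstacle; the only point needing care is the sign bookkeeping through the deck relation when $\sigma(ij)=-1$.
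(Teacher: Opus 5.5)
Your proposal is correct and follows the paper's proof: take the allowed lift $i^+j^{\sigma(ij)}$, apply the alternating deletion boundary, and rewrite $[e_{j^{\sigma(ij)}}]$ using the deck relation $[e_{v^\varepsilon}]=\varepsilon[e_{v^+}]$. Your extra checks are also correct and make explicit what the paper leaves implicit for the matrix-level statement: that $\Omega_1^\sigma(D)=\mathcal A_1^\sigma(D)$, and that the classes $[e_{v^+}]$ and $[e_{i^+j^{\sigma(ij)}}]$ form bases (via $\Lambda_0$ and $\Lambda_1$).
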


\begin{proof}
The allowed lift of the arrow $i\to j$ that begins at $i^+$ is
$i^+\to j^{\sigma(ij)}$.  Applying the signed boundary gives
\[
        \partial_1^\sigma[e_{i^+j^{\sigma(ij)}}]
        =
        [e_{j^{\sigma(ij)}}]-[e_{i^+}].
\]
By the deck relation in $R_0^\sigma(D)$, every zero-path class satisfies
\[
        [e_{v^\varepsilon}]=\varepsilon[e_{v^+}].
\]
Substituting $\varepsilon=\sigma(ij)$ in the first term therefore yields
\[
        \partial_1^\sigma[e_{i^+j^{\sigma(ij)}}]
        =
        \sigma(ij)[e_{j^+}]-[e_{i^+}],
\]
as claimed.
\end{proof}

\begin{definition}\label{def:signed-digraph-homology}
The signed GLMY homology of $(D,\sigma)$ is the homology of the chain complex
\[
        \cdots
        \xrightarrow{\partial_3^\sigma}
        \Omega_2^\sigma(D)
        \xrightarrow{\partial_2^\sigma}
        \Omega_1^\sigma(D)
        \xrightarrow{\partial_1^\sigma}
        \Omega_0^\sigma(D).
\]
With \(\Omega_{-1}^\sigma(D)=0\), the signed GLMY homology groups are
\[
 H_p^\sigma(D)
 =\frac{\ker\bigl(\partial_p^\sigma:\Omega_p^\sigma(D)
                               \to\Omega_{p-1}^\sigma(D)\bigr)}
        {\operatorname{im}\bigl(\partial_{p+1}^\sigma:\Omega_{p+1}^\sigma(D)
                               \to\Omega_p^\sigma(D)\bigr)},\qquad p\ge0.
\]
We set \(\beta_p^\sigma(D)=\dim_{\mathbb R}H_p^\sigma(D)\).
\end{definition}

\begin{proposition}[Double-cover description]\label{prop:double-cover-description}
For every signed digraph $(D,\sigma)$ and every \(p\ge0\),
\[
 H_p^\sigma(D)\cong H_p(\widetilde D_\sigma)^-
 =\{\alpha\in H_p(\widetilde D_\sigma):\tau\alpha=-\alpha\}.
\]
The isomorphism is induced by the chain isomorphism
\(\Lambda_\bullet:\Omega_\bullet^\sigma(D)
\longrightarrow\Omega_\bullet(\widetilde D_\sigma)^-\).
\end{proposition}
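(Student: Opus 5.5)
The plan has two parts: first show that \(\Lambda_\bullet\) carries \(\Omega_\bullet^\sigma(D)\) isomorphically onto \(\Omega_\bullet(\widetilde D_\sigma)^-\) as chain complexes, and then identify the homology of the anti-invariant subcomplex with the anti-invariant part of \(H_\bullet(\widetilde D_\sigma)\) using the real projections \(\frac12(\operatorname{id}\pm\tau)\).

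\emph{Step 1: the chain isomorphism.} By \cref{prop:anti-invariant-realization}, \(\Lambda_p\) maps \(\mathcal A_p^\sigma(D)\) onto \(\mathcal A_p(\widetilde D_\sigma)^-\). It also intertwines \(\partial^\sigma_{\mathrm{reg}}\) with the regular boundary \(\partial\) on \(\mathcal R_\bullet(\widetilde D_\sigma)\). Let \(z\in\mathcal A_p^\sigma(D)\) and set \(x=\Lambda_p z\). Then \(\partial_p x=\Lambda_{p-1}\partial_{\mathrm{reg},p}^\sigma z\), and \(\partial_p x\) is anti-invariant because \(\partial\) commutes with \(\tau\). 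Since \(\Lambda_{p-1}\) is a bijection \(R_{p-1}^\sigma(D)\to\mathcal R_{p-1}(\widetilde D_\sigma)^-\) carrying \(\mathcal A^\sigma_{p-1}(D)\) onto \(\mathcal A_{p-1}(\widetilde D_\sigma)^-\), we get
\[
\partial_{\mathrm{reg},p}^\sigma z\in\mathcal A_{p-1}^\sigma(D)
\iff
\partial_p x\in\mathcal A_{p-1}(\widetilde D_\sigma)^- .
\]
The right-hand condition is equivalent to \(\partial_p x\in\mathcal A_{p-1}(\widetilde D_\sigma)\), because \(\partial_p x\) is already anti-invariant. Hence \(\Lambda_p(\Omega_p^\sigma(D))=\Omega_p(\widetilde D_\sigma)\cap\mathcal A_p(\widetilde D_\sigma)^-=\Omega_p(\widetilde D_\sigma)^-\). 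In degree zero this is the first part of \cref{prop:anti-invariant-realization} directly. The boundary identity then makes \(\Lambda_\bullet\) a chain isomorphism \(\Omega^\sigma_\bullet(D)\to\Omega_\bullet(\widetilde D_\sigma)^-\), where the target carries the restricted ordinary GLMY boundary. Consequently \(H_p^\sigma(D)\cong H_p(\Omega_\bullet(\widetilde D_\sigma)^-)\).

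\emph{Step 2: homology of the anti-invariant part.} Since \(\tau\) preserves \(\Omega_\bullet(\widetilde D_\sigma)\) and commutes with \(\partial\), the operators \(P_\pm=\frac12(\operatorname{id}\pm\tau)\) are chain maps. They are complementary idempotents, so
\[
\Omega_\bullet(\widetilde D_\sigma)=\Omega_\bullet(\widetilde D_\sigma)^+\oplus\Omega_\bullet(\widetilde D_\sigma)^-
\]
as chain complexes. Homology commutes with finite direct sums, so \(H_p(\widetilde D_\sigma)=H_p(\Omega^+)\oplus H_p(\Omega^-)\). On the summand \(H_p(\Omega^\pm)\) the induced action of \(\tau\) is \(\pm\operatorname{id}\). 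An element \(\alpha=\alpha_++\alpha_-\) therefore satisfies \(\tau\alpha=-\alpha\) exactly when \(\alpha_+=-\alpha_+\), that is, when \(\alpha_+=0\), since we work over \(\mathbb R\). So \(H_p(\widetilde D_\sigma)^-\) is the image of \(H_p(\Omega^-)\), and the map induced by the inclusion \(\Omega^-\hookrightarrow\Omega\) is injective because it is a split summand. Composing this identification with Step 1 gives the stated isomorphism, induced by \(\Lambda_\bullet\).

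The main point requiring care is Step 1. We must check that the allowed-boundary condition is transported exactly, which requires \(\Lambda_{p-1}\) to be injective on all of \(R^\sigma_{p-1}(D)\), not just on allowed classes. Only then do non-allowed faces correspond to non-allowed anti-invariant chains. This injectivity holds because \(\Lambda\) is an isomorphism of the full regular modules. The rest is formal, with invertibility of \(2\) used for the projections.
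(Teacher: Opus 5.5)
Your proposal is correct and follows the same route as the paper. Like the paper, you transport the boundary-preserving condition through \(\Lambda_\bullet\) using \cref{prop:anti-invariant-realization}, and then split \(\Omega_\bullet(\widetilde D_\sigma)\) by the projections \(\frac12(\operatorname{id}\pm\tau)\) to identify the homology of the anti-invariant subcomplex with \(H_p(\widetilde D_\sigma)^-\). Your use of the bijectivity of \(\Lambda_{p-1}\) on all of \(R_{p-1}^\sigma(D)\), and your direct-sum check that \(\tau\alpha=-\alpha\) forces \(\alpha_+=0\), spell out steps the paper compresses into ``it follows'' and ``therefore''.
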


\begin{proof}
By \cref{prop:anti-invariant-realization}, $\Lambda_p$ identifies
$\mathcal A_p^\sigma(D)$ with
$\mathcal A_p(\widetilde D_\sigma)^-$ and commutes with the formal boundaries.
It follows that, for $z\in\mathcal A_p^\sigma(D)$,
\[
 z\in\Omega_p^\sigma(D)
 \quad\Longleftrightarrow\quad
 \Lambda_pz\in\Omega_p(\widetilde D_\sigma)^-.
\]
Consequently, $\Lambda_p$ restricts to an isomorphism
\[
        \Lambda_p:\Omega_p^\sigma(D)
        \longrightarrow
        \Omega_p(\widetilde D_\sigma)^-.
\]
On $\Omega_p^\sigma(D)$, the boundary identity is
$\Lambda_{p-1}\partial_p^\sigma=\partial_p\Lambda_p$.
Thus $\Lambda_\bullet$ is an isomorphism from the signed GLMY chain complex of
$D$ to the anti-invariant GLMY subcomplex of the signed double cover.  Since
the deck involution commutes with the boundary and \(2\) is invertible over
\(\mathbb R\), the projections \(\frac12(1\pm\tau)\) split
$\Omega_\bullet(\widetilde D_\sigma)$ into its invariant and anti-invariant
subcomplexes.  The homology of the anti-invariant subcomplex is therefore
canonically identified with \(H_p(\widetilde D_\sigma)^-\).
\end{proof}

\begin{proposition}\label{prop:all-positive-reduction}
Let $(D,\sigma)$ be a signed digraph such that every arrow has sign \(+1\).  Then, for every \(p\),
\[
        H_p^\sigma(D)\cong H_p(D).
\]
More precisely, the signed double cover is the disjoint union \(D^+\sqcup D^-\), and the anti-invariant subcomplex is naturally identified with the ordinary GLMY chain complex of \(D\) by
\[
        u\longmapsto u^+-u^-.
\]
\end{proposition}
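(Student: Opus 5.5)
The plan is to work entirely through the double-cover realization, \cref{prop:double-cover-description}. With every sign equal to \(+1\), the lift of an arrow \(i\to j\) starting at \(i^\varepsilon\) is \(i^\varepsilon\to j^\varepsilon\). Hence \(\widetilde D_\sigma\) is the disjoint union of the induced subdigraphs \(D^+\) and \(D^-\) on the vertex sets \(V\times\{+\}\) and \(V\times\{-\}\). Each copy is identified with \(D\) by \(\pi\), and \(\tau\) exchanges the two copies. (This is also the split case of \cref{prop:double-cover-splitting} with \(\eta\equiv+1\), applied componentwise.) For an elementary path \(\gamma=i_0\cdots i_p\) in \(D\), write \(\gamma^\pm=i_0^\pm\cdots i_p^\pm\), and extend \(u\mapsto u^\pm\) linearly on \(\mathcal R_p(D)\).

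First, I would define \(\Phi_p:\Omega_p(D)\to\Omega_p(\widetilde D_\sigma)^-\) by \(\Phi_p(u)=u^+-u^-\), and check three things.
\begin{enumerate}[label=(\roman*)]
\item \(\Phi_p\) lands in allowed chains. The map \(\gamma\mapsto\gamma^\pm\) sends allowed paths of \(D\) to allowed paths of \(\widetilde D_\sigma\).
\item \(\Phi_p\) commutes with the regular boundary. Deleting a vertex commutes with putting a constant sheet label on all vertices, and a face \(\delta\) is regular exactly when \(\delta^\pm\) is regular.
\item \(\Phi_p\) preserves the \(\Omega\)-condition in both directions. A regular path of \(D\) is allowed if and only if its constant-sheet lift is allowed in \(\widetilde D_\sigma\). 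So \(\partial u\) is allowed if and only if \(\partial(u^+-u^-)=(\partial u)^+-(\partial u)^-\) is allowed, because the \(+\) and \(-\) parts are supported on disjoint basis sets.
\end{enumerate}
It follows that \(\Phi_\bullet\) is a chain map into \(\Omega_\bullet(\widetilde D_\sigma)\), and \(\tau\Phi=-\Phi\) shows its image is anti-invariant.

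Second, I would show \(\Phi_p\) is bijective onto \(\Omega_p(\widetilde D_\sigma)^-\). Every allowed path of \(\widetilde D_\sigma\) is of the form \(\gamma^+\) or \(\gamma^-\) for a unique allowed \(\gamma\) in \(D\). Hence \(\mathcal A_p(\widetilde D_\sigma)=\mathcal A_p(D)^+\oplus\mathcal A_p(D)^-\), and an anti-invariant allowed chain has the form \(u^+-u^-\) with \(u\in\mathcal A_p(D)\) unique. By (iii), this chain lies in \(\Omega_p(\widetilde D_\sigma)\) exactly when \(u\in\Omega_p(D)\). Injectivity is immediate from the direct sum.

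Finally, compose with \(\Lambda_\bullet^{-1}\) from \cref{prop:double-cover-description}. This gives a chain isomorphism \(\Omega_\bullet(D)\cong\Omega^\sigma_\bullet(D)\), explicitly \(u\mapsto[u^+]\) up to the factor \(2\), and hence \(H_p^\sigma(D)\cong H_p(D)\). Equivalently, one can note that \(H_p(\widetilde D_\sigma)=H_p(D^+)\oplus H_p(D^-)\), whose anti-invariant part is the graph of \(-\tau\) and so is isomorphic to \(H_p(D)\).

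The only delicate point is step (iii). One must be sure that regularity and allowedness on the cover agree with those on \(D\) for constant-sheet paths. This includes the edge case of faces such as \(i^+i^+\), which are nonregular on both sides. I expect this to be a direct check.
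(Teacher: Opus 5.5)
Your proposal is correct and follows the same route as the paper. The paper's proof likewise splits \(\widetilde D_\sigma=D^+\sqcup D^-\), writes every anti-invariant chain uniquely as \(u^+-u^-\), and uses \(\partial_p(u^+-u^-)=(\partial_pu)^+-(\partial_pu)^-\); your step (iii) makes explicit the transfer of the \(\Omega\)-condition, which the paper leaves implicit.

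One minor correction: the composite \(\Lambda^{-1}\Phi\) is exactly \(u\mapsto[u^+]\), with no factor of \(2\). Since \([u^-]=-[u^+]\), you get \(\tfrac12[u^+-u^-]=[u^+]\).
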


\begin{proof}
When all signs are positive, lifted arrows never change sheets, so \(\widetilde D_\sigma=D^+\sqcup D^-\).  The deck involution swaps the two copies.  Hence every anti-invariant \(p\)-chain is uniquely of the form \(u^+-u^-\), where \(u\) is an ordinary GLMY \(p\)-chain on \(D\).  The ordinary GLMY boundary acts in the same way on the two sheets:
\[
        \partial_p(u^+-u^-)=(\partial_pu)^+-(\partial_pu)^-.
\]
Thus the anti-invariant chain complex is canonically isomorphic to the ordinary GLMY chain complex of \(D\).  Passing to homology gives the assertion.
\end{proof}

\begin{proposition}[Signed digraph switching invariance]\label{prop:signed-digraph-switching-invariance}
Let $(D,\sigma)$ be a signed digraph and let $\eta:V(D)\to\Signs$.  Define the switched signing by
\[
        \sigma^\eta(ij)=\eta(i)\sigma(ij)\eta(j)
\]
for every arrow $i\to j$.  Then
\[
        H_p^\sigma(D)\cong H_p^{\sigma^\eta}(D)
\]
for every $p$.
\end{proposition}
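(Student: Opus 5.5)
The plan is to build a deck-equivariant digraph isomorphism between the two signed double covers and then apply \cref{prop:double-cover-description}. Let
\[
        \Phi:\widetilde D_\sigma\longrightarrow\widetilde D_{\sigma^\eta},
        \qquad
        \Phi(i^\varepsilon)=i^{\varepsilon\eta(i)}.
\]
This map is a bijection on vertices, covers the identity of \(D\) (so \(\pi\circ\Phi=\pi\)), and commutes with the deck involution: \(\Phi(\tau i^\varepsilon)=i^{-\varepsilon\eta(i)}=\tau\Phi(i^\varepsilon)\).

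The first step is to check that \(\Phi\) carries arrows bijectively onto arrows. Take an arrow \(i^\varepsilon\to j^{\varepsilon\sigma(ij)}\) of \(\widetilde D_\sigma\). Its image is \(i^{\varepsilon\eta(i)}\to j^{\varepsilon\sigma(ij)\eta(j)}\). Since
\[
        \varepsilon\eta(i)\,\sigma^\eta(ij)
        =\varepsilon\eta(i)^2\sigma(ij)\eta(j)
        =\varepsilon\sigma(ij)\eta(j),
\]
the image is exactly the arrow of \(\widetilde D_{\sigma^\eta}\) lifting \(i\to j\) from \(i^{\varepsilon\eta(i)}\). The same computation with \(\eta\) applied to \(\sigma^\eta\) shows that the inverse map \(i^\delta\mapsto i^{\delta\eta(i)}\) also preserves arrows, using \((\sigma^\eta)^\eta=\sigma\). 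Hence \(\Phi\) is a digraph isomorphism with \(\Phi\tau=\tau\Phi\).

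The second step is functoriality at the chain level. Define \(\Phi_*e_{v_0\cdots v_p}=e_{\Phi(v_0)\cdots\Phi(v_p)}\). Because \(\Phi\) is a bijection on vertices, it preserves regularity. Because it is a digraph isomorphism, it preserves allowedness in both directions. It also commutes with vertex deletion, so it commutes with the regular boundary \(\partial_p\). Together these facts give isomorphisms \(\mathcal A_p(\widetilde D_\sigma)\cong\mathcal A_p(\widetilde D_{\sigma^\eta})\) and \(\Omega_p(\widetilde D_\sigma)\cong\Omega_p(\widetilde D_{\sigma^\eta})\) that commute with \(\partial_\bullet\). Since \(\Phi_*\) also commutes with \(\tau\), it restricts to an isomorphism of the anti-invariant subcomplexes \(\Omega_\bullet(\widetilde D_\sigma)^-\cong\Omega_\bullet(\widetilde D_{\sigma^\eta})^-\).

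Composing with the chain isomorphisms \(\Lambda_\bullet\) of \cref{prop:double-cover-description} for both signings then yields
\[
        H_p^\sigma(D)\cong H_p(\widetilde D_\sigma)^-
        \cong H_p(\widetilde D_{\sigma^\eta})^-
        \cong H_p^{\sigma^\eta}(D).
\]
Alternatively, one can write the map directly on the signed side as \([e_{i_0^{\varepsilon_0}\cdots}]\mapsto[e_{i_0^{\varepsilon_0\eta(i_0)}\cdots}]\), which is well defined because it respects the deck relation.

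I expect no serious obstacle. The only points needing care are that \(\Phi\) is an isomorphism of digraphs and not merely a morphism, so that \(\Omega_p\) is preserved in both directions, and that \(\Phi\) is deck-equivariant, so that anti-invariant parts correspond. Both reduce to the one-line sign computation above.
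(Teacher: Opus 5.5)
Your proposal is correct and follows essentially the same route as the paper: the sheet-relabelling map \(i^\varepsilon\mapsto i^{\eta(i)\varepsilon}\) is shown to be a deck-equivariant digraph isomorphism \(\widetilde D_\sigma\to\widetilde D_{\sigma^\eta}\), and \cref{prop:double-cover-description} transfers the resulting isomorphism of anti-invariant homology to signed GLMY homology. Your explicit check that the inverse also preserves arrows, via \((\sigma^\eta)^\eta=\sigma\), makes precise a step the paper only asserts.
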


\begin{proof}
To compare the signed double covers before and after switching, define the vertex map
\[
        \Phi_\eta:\widetilde D_{\sigma}\longrightarrow \widetilde D_{\sigma^\eta},
        \qquad
        \Phi_\eta(i^\varepsilon)=i^{\eta(i)\varepsilon}.
\]
If $i^\varepsilon\to j^{\varepsilon\sigma(ij)}$ is a lifted arrow in $\widetilde D_\sigma$, then its image is
\[
        i^{\eta(i)\varepsilon}
        \longrightarrow
        j^{\eta(j)\varepsilon\sigma(ij)}.
\]
In $\widetilde D_{\sigma^\eta}$, the arrow starting at $i^{\eta(i)\varepsilon}$ over $i\to j$ ends at
\[
        j^{\eta(i)\varepsilon\sigma^\eta(ij)}
        =
        j^{\eta(i)\varepsilon\eta(i)\sigma(ij)\eta(j)}
        =
        j^{\eta(j)\varepsilon\sigma(ij)}.
\]
Thus $\Phi_\eta$ is a digraph isomorphism.  Since $\Phi_\eta$ commutes
with the deck involutions, the induced homology isomorphism restricts to
\[
        H_p(\widetilde D_{\sigma})^-
        \cong
        H_p(\widetilde D_{\sigma^\eta})^-.
\]
The double-cover description of \cref{prop:double-cover-description} gives the claim.
\end{proof}
\begin{corollary}\label{cor:switching-balanced-reduction}
If \((D,\sigma)\) is switching-balanced, then
\[
        H_p^\sigma(D)\cong H_p(D)
        \qquad\text{for every }p.
\]
\end{corollary}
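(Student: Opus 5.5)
The plan is to combine the two preceding propositions. Since $(D,\sigma)$ is switching-balanced, \cref{def:switching-balanced-digraph} provides a function $\eta:V(D)\to\Signs$ with $\sigma(ij)=\eta(i)\eta(j)$ for every arrow $i\to j$. The switched signing then satisfies
\[
\sigma^\eta(ij)=\eta(i)\sigma(ij)\eta(j)=\eta(i)^2\eta(j)^2=+1
\]
for every arrow, so $\sigma^\eta$ is the all-positive signing on $D$.

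Next I apply \cref{prop:signed-digraph-switching-invariance} to obtain $H_p^\sigma(D)\cong H_p^{\sigma^\eta}(D)$ for every $p$. Because every arrow has sign $+1$ under $\sigma^\eta$, \cref{prop:all-positive-reduction} gives $H_p^{\sigma^\eta}(D)\cong H_p(D)$. Composing the two isomorphisms proves the claim.

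Concretely, the composite is induced on chains by the map
\[
u\longmapsto \Lambda^{-1}\bigl(\Phi_\eta^{-1}(u^+-u^-)\bigr).
\]
Here $\Phi_\eta^{-1}(i^\varepsilon)=i^{\eta(i)\varepsilon}$, so this sends a basis path $e_{i_0\cdots i_p}$ to the class of its lift starting on sheet $\eta(i_0)$.

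None of these steps poses a real obstacle. The only point to check is that \cref{prop:signed-digraph-switching-invariance} places no hypothesis on $\eta$, so it applies to the particular $\eta$ coming from switching balance.
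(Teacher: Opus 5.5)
Your proposal is correct and is the paper's proof: choose $\eta$ from switching balance so that $\sigma^\eta$ is all-positive, then compose \cref{prop:signed-digraph-switching-invariance} with \cref{prop:all-positive-reduction}. Your explicit chain-level formula, which sends $e_{i_0\cdots i_p}$ to the class of its lift starting on sheet $\eta(i_0)$, is also correct, and it shows the dependence on $\eta$ that the paper remarks on right after the corollary.
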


\begin{proof}
Choose a switching function \(\eta\) for which \(\sigma^\eta\) is all-positive.
Then \cref{prop:signed-digraph-switching-invariance,prop:all-positive-reduction}
together give
\[
        H_p^\sigma(D)
        \cong H_p^{\sigma^\eta}(D)
        \cong H_p(D).
\]
\end{proof}

The isomorphism in \cref{cor:switching-balanced-reduction} uses the chosen
switching function \(\eta\).  Two such functions differ by a constant sign
on each weak component, and the resulting identifications differ by that
sign on the component's homology.  For the all-positive signing, the
identification in \cref{prop:all-positive-reduction} is canonical.  By
\cref{prop:double-cover-splitting}, the switching-balanced case has the
geometric characterization that, on each weak component, the signed double
cover splits into two copies of the underlying digraph.

\begin{example}\label{ex:unbalanced-triangle}
Let \(D_{\mathrm{cyc}}\) be the directed cycle
\[
        0\to1,\qquad 1\to2,\qquad 2\to0,
\]
and equip it with the signing
\[
        \sigma(01)=\sigma(12)=+1,
        \qquad
        \sigma(20)=-1.
\]
Thus \((D_{\mathrm{cyc}},\sigma)\) is unbalanced.  Its signed double cover is
shown in \cref{fig:unbalanced-three-cycle-cover}.

\begin{figure}[htbp]
\centering
\begin{tikzpicture}[>=stealth,scale=1.0,
    vtx/.style={circle,fill=black,inner sep=1.8pt},
    posedge/.style={->,blue,thick},
    negedge/.style={->,red,thick},
    coveredge/.style={->,blue,thick}]
% signed base digraph: same vertex placement as Figure~\ref{fig:signed-triangle-lift-obstruction}
\begin{scope}[xshift=0cm]
  \node at (0,2.10) {$ (D_{\mathrm{cyc}},\sigma)$};
  \node[vtx] (b0) at (-1.7,-.25) {};
  \node[vtx] (b1) at (0,1.15) {};
  \node[vtx] (b2) at (1.7,-.25) {};
  \node[left=2pt]  at (b0) {$0$};
  \node[above=2pt] at (b1) {$1$};
  \node[right=2pt] at (b2) {$2$};
  \draw[posedge] (b0)--(b1);
  \draw[posedge] (b1)--(b2);
  \draw[negedge] (b2)--(b0);
\end{scope}
% signed double cover, with the same label placement as Figure~\ref{fig:signed-triangle-lift-obstruction}
\begin{scope}[xshift=7.0cm]
  \node at (0,2.10) {$\widetilde D_\sigma$};
  \node[vtx] (p0) at (-2.15,.55) {};
  \node[vtx] (p1) at (0,1.25) {};
  \node[vtx] (p2) at (2.15,.55) {};
  \node[vtx] (m0) at (-2.15,-.65) {};
  \node[vtx] (m1) at (0,-1.35) {};
  \node[vtx] (m2) at (2.15,-.65) {};
  \node[left=2pt]  at (p0) {$0^+$};
  \node[above=2pt] at (p1) {$1^+$};
  \node[right=2pt] at (p2) {$2^+$};
  \node[left=2pt]  at (m0) {$0^-$};
  \node[below=2pt] at (m1) {$1^-$};
  \node[right=2pt] at (m2) {$2^-$};
  \draw[coveredge] (p0)--(p1);
  \draw[coveredge] (p1)--(p2);
  \draw[coveredge] (m0)--(m1);
  \draw[coveredge] (m1)--(m2);
  \draw[negedge] (p2)--(m0);
  \draw[negedge] (m2)--(p0);
\end{scope}
\end{tikzpicture}
\caption{An unbalanced signed directed $3$-cycle and its signed double cover.
The vertex positions agree with Figure~\ref{fig:signed-triangle-lift-obstruction};
here the negative arrow is reversed, from $2$ to $0$.  Accordingly, its lifts
are $2^+\to0^-$ and $2^-\to0^+$.}
\label{fig:unbalanced-three-cycle-cover}
\end{figure}

For the unbalanced signing \(\sigma\), the double cover in
\cref{fig:unbalanced-three-cycle-cover} is a directed $6$-cycle.  Hence its
ordinary first GLMY homology is one-dimensional, generated by the class of
the sum of the six directed edges.  The deck involution fixes this homology
class, so the anti-invariant part vanishes.  Therefore, by
\cref{prop:double-cover-description},
\[
        H_1^\sigma(D_{\mathrm{cyc}})=0.
\]
By contrast, for the all-positive signing \(\sigma_+\),
\cref{cor:switching-balanced-reduction} gives
\[
        H_1^{\sigma_+}(D_{\mathrm{cyc}})
        \cong H_1(D_{\mathrm{cyc}})
        \cong \mathbb R.
\]
Thus the first signed GLMY homology depends on the signing even when the
underlying directed graph is fixed.
\end{example}

\section{Canonical signed graph GLMY homology}\label{sec:canonical}

We now pass from signed digraphs to signed graphs by applying the preceding
construction to the canonical bidirected completion, in direct analogy with
the ordinary GLMY homology of an undirected graph.

\begin{definition}
Let $\Sigma=(G,\sigma)$ be a signed graph.  Its signed bidirected completion
$\overleftrightarrow\Sigma$ is obtained by replacing every edge
$uv\in E(G)$ by the two arrows $u\to v$ and $v\to u$, both carrying the
sign $\sigma(uv)$.  The canonical signed GLMY homology of $\Sigma$ is
\[
        \mathcal H_p^\sigma(\Sigma)
        :=H_p^\sigma(\overleftrightarrow\Sigma).
\]
By \cref{prop:double-cover-description},
\[
        \mathcal H_p^\sigma(\Sigma)
        \cong H_p(\widetilde{\overleftrightarrow{\Sigma}}_{\sigma})^-.
\]
\end{definition}

\begin{corollary}
\label{cor:canonical-switching-balanced}
Let \(\Sigma=(G,\sigma)\) be a signed graph and let
\(\eta:V(G)\to\Signs\).  Define
\[
        \sigma^\eta(uv)=\eta(u)\sigma(uv)\eta(v)
        \qquad (uv\in E(G)).
\]
Then, for every \(p\ge0\),
\[
        \mathcal H_p^\sigma(\Sigma)
        \cong
        \mathcal H_p^{\sigma^\eta}(\Sigma).
\]
If \(\Sigma\) is balanced, then
\[
        \mathcal H_p^\sigma(\Sigma)\cong H_p(G)
        \qquad\text{for every }p\ge0.
\]
\end{corollary}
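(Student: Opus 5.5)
The plan is to reduce both assertions to the signed-digraph results of \cref{sec:signed-digraphs}, applied to the bidirected completion. The first step is to check that switching commutes with bidirected completion. If \(\eta:V(G)\to\Signs\) is given, then \(\overleftrightarrow{(G,\sigma^\eta)}\) has arrows \(u\to v\) and \(v\to u\), both with sign \(\eta(u)\sigma(uv)\eta(v)\). Since the product of signs is commutative, this equals \(\eta(v)\sigma(vu)\eta(u)\). So the signing of \(\overleftrightarrow{(G,\sigma^\eta)}\) is exactly the digraph switching \((\sigma_{\overleftrightarrow\Sigma})^\eta\) in the sense of \cref{prop:signed-digraph-switching-invariance}. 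Applying that proposition to the signed digraph \(\overleftrightarrow\Sigma\) gives \(\mathcal H_p^\sigma(\Sigma)=H_p^{\sigma}(\overleftrightarrow\Sigma)\cong H_p^{\sigma^\eta}(\overleftrightarrow\Sigma)=\mathcal H_p^{\sigma^\eta}(\Sigma)\).

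For the balanced case, I would first show that \(\overleftrightarrow\Sigma\) is switching-balanced as a signed digraph. Antiparallel arrows carry the same sign by construction, so \(\overleftrightarrow\Sigma\) is direction-compatible in the sense of \cref{rem:underlying-signed-graph}. Its underlying signed graph is \((G,\sigma)\) itself. By that remark, which applies Harary's criterion (\cref{thm:harary}) on each connected component, balance of \(\Sigma\) is equivalent to switching balance of \(\overleftrightarrow\Sigma\).

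Alternatively, one can argue directly: choose \(\eta\) componentwise by \cref{thm:harary}(ii), so that \(\sigma^\eta=\sigma_+\). Then \cref{cor:switching-balanced-reduction} gives \(H_p^\sigma(\overleftrightarrow\Sigma)\cong H_p(\overleftrightarrow G)\), and \(H_p(\overleftrightarrow G)=H_p(G)\) by the definition of GLMY homology of an undirected graph. Equivalently, one can combine the first part of the corollary with \cref{prop:all-positive-reduction} applied to \(\overleftrightarrow{(G,\sigma_+)}=\overleftrightarrow G\) with all signs positive.

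The only point needing care is Harary's criterion, which is stated for connected graphs. I would apply it on each component and glue the resulting switching functions into a single \(\eta\). This works because balance of \(\Sigma\) means every cycle is balanced, and each cycle lies in a single component. Apart from this, everything is a direct citation, so I expect no substantial obstacle.
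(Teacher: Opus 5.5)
Your proposal is correct and follows the paper's proof. In both, the first assertion is \cref{prop:signed-digraph-switching-invariance} applied to $\overleftrightarrow\Sigma$, once one notes that switching commutes with bidirected completion. The balanced case then combines a Harary switching to the all-positive signing with \cref{prop:all-positive-reduction} and $H_p(G)=H_p(\overleftrightarrow G)$. You also note that Harary's criterion must be applied componentwise and the resulting switching functions glued into one $\eta$; the paper leaves this step implicit, and you handle it correctly.
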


\begin{proof}
Switching \(\Sigma\) by \(\eta\) switches both arrows over each edge of
\(\overleftrightarrow\Sigma\) by the same vertex function.  Hence the
first assertion is \cref{prop:signed-digraph-switching-invariance} applied
to the bidirected completion.  If \(\Sigma\) is balanced, Harary's
criterion gives a switching for which all edge signs are positive.  The
second assertion then follows from the first assertion and
\cref{prop:all-positive-reduction}, together with
\(H_p(G)=H_p(\overleftrightarrow G)\).
\end{proof}

We now compare the degree-zero canonical signed GLMY group with the usual
signed graph Laplacian.  For a signed graph \(\Sigma=(G,\sigma)\), let
\(A_\sigma\) be the signed adjacency matrix,
\[
 (A_\sigma)_{uv}
 =\begin{cases}
   \sigma(uv),&uv\in E(G),\\
   0,&uv\notin E(G),
  \end{cases}
\]
and let \(D_G\) be the diagonal degree matrix of \(G\).  The usual signed
graph Laplacian is
\[
        L^\sigma(\Sigma):=D_G-A_\sigma.
\]
Equivalently, for
\[
        h=\sum_{v\in V(G)}h(v)e_v,
\]
one has
\[
        (L^\sigma h)(u)
        =\deg(u)h(u)-\sum_{v\sim u}\sigma(uv)h(v).
\]
We shall prove that the degree-zero canonical signed GLMY homology is
naturally identified with the kernel of this operator:
\[
        \mathcal H_0^\sigma(\Sigma)\cong\ker L^\sigma(\Sigma).
\]

To relate \(L^\sigma\) to the signed GLMY boundary, identify
\(\Omega_0^\sigma(\overleftrightarrow\Sigma)\) with
\[
        C_0(\Sigma)
        =\operatorname{span}_{\mathbb R}\{e_v:v\in V(G)\}
\]
by \([e_{v^+}]\mapsto e_v\), and equip \(C_0(\Sigma)\) with the standard
inner product
\[
        \Big\langle\sum_v h(v)e_v,\sum_v k(v)e_v\Big\rangle_0
        :=\sum_{v\in V(G)}h(v)k(v).
\]
Choose one temporary orientation \(u\to v\) for each edge \(uv\in E(G)\),
and let
\[
        C_1(\Sigma)
        =\operatorname{span}_{\mathbb R}
        \{e_{uv}:u\to v\text{ is the chosen orientation of }uv\in E(G)\},
\]
with the basis \(\{e_{uv}\}\) orthonormal.  Define the signed incidence map
\[
        B_\sigma:C_1(\Sigma)\longrightarrow C_0(\Sigma),
        \qquad
        B_\sigma e_{uv}=\sigma(uv)e_v-e_u.
\]
Let \(B_\sigma^*:C_0(\Sigma)\to C_1(\Sigma)\) be the adjoint.  A direct
calculation gives the standard factorization
\[
        L^\sigma(\Sigma)=B_\sigma B_\sigma^*.
\]
Reversing the temporary orientation of one edge multiplies the
corresponding column of \(B_\sigma\) by \(-\sigma(uv)\), so the product
\(B_\sigma B_\sigma^*\) is independent of all temporary orientations.

By \cref{prop:degree-one-incidence}, the two arrows of
\(\overleftrightarrow\Sigma\) lying over an edge \(uv\), temporarily
oriented as \(u\to v\), satisfy
\[
\begin{aligned}
 \partial_1^\sigma
 \bigl[e_{u^+v^{\sigma(uv)}}\bigr]
 &=\sigma(uv)e_v-e_u
  =B_\sigma e_{uv},\\
 \partial_1^\sigma
 \bigl[e_{v^+u^{\sigma(uv)}}\bigr]
 &=\sigma(uv)e_u-e_v
  =-\sigma(uv)B_\sigma e_{uv}.
\end{aligned}
\]
Consequently,
\[
        \operatorname{im}\bigl(
        \partial_1^\sigma:\Omega_1^\sigma(\overleftrightarrow\Sigma)
        \to\Omega_0^\sigma(\overleftrightarrow\Sigma)\bigr)
        =\operatorname{im}B_\sigma.
\]
Thus the degree-zero signed GLMY group is controlled by the same incidence
operator as the usual signed graph Laplacian.  The precise identification is
the following.

\begin{corollary}[Zero-dimensional signed GLMY homology]\label{cor:canonical-H0}
For every signed graph $\Sigma=(G,\sigma)$,
\[
        \mathcal H_0^\sigma(\Sigma)
        \cong \ker L^\sigma(\Sigma).
\]
Consequently, the dimension of $\mathcal H_0^\sigma(\Sigma)$ is the number of balanced connected components of $\Sigma$.
\end{corollary}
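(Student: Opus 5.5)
Since $\partial_0^\sigma=0$, the group $\mathcal H_0^\sigma(\Sigma)$ is the cokernel
\[
        \Omega_0^\sigma(\overleftrightarrow\Sigma)\big/\operatorname{im}\partial_1^\sigma .
\]
Under the identification $[e_{v^+}]\mapsto e_v$, the computation preceding the statement gives $\operatorname{im}\partial_1^\sigma=\operatorname{im}B_\sigma$. The identification is legitimate because $\Omega_0^\sigma=\mathcal A_0^\sigma$, the classes $[e_{v^+}]$ form a basis by the deck relation $[e_{v^-}]=-[e_{v^+}]$, and $\Omega_1^\sigma=\mathcal A_1^\sigma$: the boundary of any allowed signed arrow lies in $\mathcal A_0^\sigma$. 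Hence
\[
        \mathcal H_0^\sigma(\Sigma)\cong C_0(\Sigma)/\operatorname{im}B_\sigma .
\]
For the standard inner product we have $C_0=\operatorname{im}B_\sigma\oplus(\operatorname{im}B_\sigma)^\perp$ and $(\operatorname{im}B_\sigma)^\perp=\ker B_\sigma^*$. So orthogonal projection identifies the quotient with $\ker B_\sigma^*$.

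Next I will show $\ker B_\sigma^*=\ker B_\sigma B_\sigma^*=\ker L^\sigma(\Sigma)$. This uses the factorization $L^\sigma=B_\sigma B_\sigma^*$ stated above and the identity $\langle L^\sigma h,h\rangle_0=\|B_\sigma^*h\|^2$. This gives the isomorphism, which is canonical once the inner product is fixed.

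For the dimension count, I decompose $G$ into connected components. $L^\sigma$ is block diagonal, so it suffices to treat a connected $\Sigma$. There, $h\in\ker B_\sigma^*$ means $\langle h,B_\sigma e_{uv}\rangle=0$ for every edge, that is,
\[
        h(u)=\sigma(uv)h(v)\quad\text{for every edge }uv .
\]
Such an $h$ is determined by its value at one vertex by propagating along paths, so the kernel has dimension at most one. If the kernel is nonzero, then $h$ vanishes nowhere. Setting $\eta(v)=\operatorname{sign}h(v)$ gives $\sigma(uv)=\eta(u)\eta(v)$, and $\Sigma$ is balanced by \cref{thm:harary}. Conversely, if $\Sigma$ is balanced, \cref{thm:harary} provides $\eta$ with this property, and $h=\eta$ is a nonzero kernel vector.

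The only step needing care is the sign bookkeeping in $h(u)=\sigma(uv)h(v)$. One should check it with $B_\sigma e_{uv}=\sigma(uv)e_v-e_u$ and $\sigma(uv)^2=1$, and confirm it is independent of the temporary orientation. Otherwise the argument is routine linear algebra. Alternatively, the dimension statement can be read off from \cref{prop:double-cover-description} together with \cref{prop:double-cover-splitting}: $H_0$ of the cover is spanned by component classes, and only a split component contributes an anti-invariant class.
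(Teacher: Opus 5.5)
Your proposal is correct and follows the paper's proof essentially step for step. The paper also identifies $\mathcal H_0^\sigma(\Sigma)$ with $\operatorname{coker}B_\sigma$, projects orthogonally onto $\ker B_\sigma^*$, uses $\langle h,L^\sigma h\rangle_0=\lVert B_\sigma^*h\rVert_1^2$, and reduces the dimension count to the edge equations $h(u)=\sigma(uv)h(v)$; the only difference is that you settle balance through Harary's criterion with $\eta=\operatorname{sign}h$, whereas the paper argues directly that a nonzero initial value propagates consistently exactly when every cycle is positive.
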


\begin{proof}
Since the degree-zero boundary is zero and the degree-one boundary has the
same image as $B_\sigma$,
\[
        \mathcal H_0^\sigma(\Sigma)
        =C_0(\Sigma)/\operatorname{im}B_\sigma
        =\operatorname{coker}B_\sigma.
\]
By the definition of the adjoint,
\[
        (\operatorname{im}B_\sigma)^\perp
        =\ker B_\sigma^*.
\]
Indeed, for $h\in C_0(\Sigma)$,
\[
\begin{aligned}
 h\in(\operatorname{im}B_\sigma)^\perp
 &\Longleftrightarrow
 \langle B_\sigma c,h\rangle_0=0
 \quad\text{for every }c\in C_1(\Sigma)\\
 &\Longleftrightarrow
 \langle c,B_\sigma^*h\rangle_1=0
 \quad\text{for every }c\in C_1(\Sigma)\\
 &\Longleftrightarrow B_\sigma^*h=0.
\end{aligned}
\]
Since $C_0(\Sigma)$ is finite-dimensional, we therefore have the orthogonal
decomposition
\[
        C_0(\Sigma)
        =\operatorname{im}B_\sigma\oplus\ker B_\sigma^*.
\]
Orthogonal projection onto $\ker B_\sigma^*$ consequently induces an
isomorphism
\[
        \operatorname{coker}B_\sigma
        =C_0(\Sigma)/\operatorname{im}B_\sigma
        \cong\ker B_\sigma^*.
\]
For a zero-chain
\[
        h=\sum_{v\in V(G)}h(v)e_v\in C_0(\Sigma),
\]
we have
\[
        \langle h,L^\sigma(\Sigma)h\rangle_0
        =\langle B_\sigma^*h,B_\sigma^*h\rangle_1
        =\lVert B_\sigma^*h\rVert_1^2.
\]
Hence
\[
        \ker L^\sigma(\Sigma)=\ker B_\sigma^*,
\]
and therefore
\[
        \mathcal H_0^\sigma(\Sigma)\cong\ker L^\sigma(\Sigma).
\]
This proves the isomorphism
$\mathcal H_0^\sigma(\Sigma)\cong\ker L^\sigma(\Sigma)$.  It remains to
determine the dimension of $\ker L^\sigma(\Sigma)$.

If the edge \(uv\) is temporarily oriented as \(u\to v\), then the
coefficient of \(e_{uv}\) in \(B_\sigma^*h\) is
\[
        \sigma(uv)h(v)-h(u).
\]
Thus every zero-chain in \(\ker L^\sigma(\Sigma)\) satisfies
\[
        h(v)=\sigma(uv)h(u)
\]
along every edge \(uv\).  On each connected component, these equations
determine \(h\) from its value at one vertex.  A nonzero initial value
extends consistently precisely when every cycle has sign \(+1\).
Hence a balanced component contributes one dimension, while an
unbalanced component contributes zero dimensions.
\end{proof}

\begin{example}\label{ex:canonical-sign-sensitive-bowtie}
Let $B$ be the bowtie graph consisting of two triangles meeting at the vertex
$0$:
\[
        E(B)=\{01,12,20,03,34,40\}.
\]
For the all-positive signing \(\sigma_+\), both ordinary triangle classes are killed by the
canonical bidirected GLMY two-path fillings, and therefore
\[
        \mathcal H_0^{\sigma_+}(B)\cong\mathbb R,
        \qquad
        \mathcal H_1^{\sigma_+}(B)=0.
\]
For the signing
\[
        \sigma(01)=\sigma(03)=-1,
        \qquad
        \sigma(12)=\sigma(20)=\sigma(34)=\sigma(40)=+1,
\]
the cycles $0120$ and $0340$ are unbalanced, so
\(\mathcal H_0^\sigma(B)=0\).

\begin{figure}[htbp]
\centering
\resizebox{0.98\textwidth}{!}{%
\begin{tikzpicture}[
  >=stealth,
  dot/.style={circle,fill=black,inner sep=1.8pt},
  vlabel/.style={fill=white,inner sep=0.7pt,font=\small},
  sgpos/.style={blue,thick},
  sgneg/.style={red,thick}
]
  % all-positive bowtie
  \begin{scope}[xshift=0cm]
    \node[dot] (a0) at (0,0) {};
    \node[dot] (a1) at (-1.25,0.95) {};
    \node[dot] (a2) at (-1.25,-0.95) {};
    \node[dot] (a3) at (1.25,0.95) {};
    \node[dot] (a4) at (1.25,-0.95) {};
    \draw[sgpos] (a0)--(a1);
    \draw[sgpos] (a1)--(a2);
    \draw[sgpos] (a2)--(a0);
    \draw[sgpos] (a0)--(a3);
    \draw[sgpos] (a3)--(a4);
    \draw[sgpos] (a4)--(a0);
    \node[vlabel,above left=2pt] at (a1) {$1$};
    \node[vlabel,below left=2pt] at (a2) {$2$};
    \node[vlabel,above=3pt] at (a0) {$0$};
    \node[vlabel,above right=2pt] at (a3) {$3$};
    \node[vlabel,below right=2pt] at (a4) {$4$};
    \node[font=\small] at (0,-1.55) {all-positive signing};
    \node[font=\small] at (0,1.55) {$\mathcal H_1^{\sigma_+}(B)=0$};
  \end{scope}

  % unbalanced bowtie
  \begin{scope}[xshift=5.4cm]
    \node[dot] (b0) at (0,0) {};
    \node[dot] (b1) at (-1.25,0.95) {};
    \node[dot] (b2) at (-1.25,-0.95) {};
    \node[dot] (b3) at (1.25,0.95) {};
    \node[dot] (b4) at (1.25,-0.95) {};
    \draw[sgneg] (b0)--(b1);
    \draw[sgpos] (b1)--(b2);
    \draw[sgpos] (b2)--(b0);
    \draw[sgneg] (b0)--(b3);
    \draw[sgpos] (b3)--(b4);
    \draw[sgpos] (b4)--(b0);
    \node[vlabel,above left=2pt] at (b1) {$1$};
    \node[vlabel,below left=2pt] at (b2) {$2$};
    \node[vlabel,above=3pt] at (b0) {$0$};
    \node[vlabel,above right=2pt] at (b3) {$3$};
    \node[vlabel,below right=2pt] at (b4) {$4$};
    \node[font=\small] at (0,-1.55) {two unbalanced triangles};
    \node[font=\small] at (0,1.55) {$\mathcal H_1^\sigma(B)\cong\mathbb R$};
  \end{scope}

  % underlying undirected graph of the signed double cover of the second signing
  \begin{scope}[xshift=11.5cm]
    \node[font=\small] at (0,2.25) {signed double cover};
    \node[dot] (p0) at (0,0.75) {};
    \node[dot] (m0) at (0,-0.75) {};

    % positive-sheet vertices (upper branches)
    \node[dot] (p2) at (-2.15,1.55) {};
    \node[dot] (p1) at (-2.15,0.45) {};
    \node[dot] (p4) at (2.15,1.55) {};
    \node[dot] (p3) at (2.15,0.45) {};

    % negative-sheet vertices (lower branches)
    \node[dot] (m1) at (-2.15,-0.45) {};
    \node[dot] (m2) at (-2.15,-1.55) {};
    \node[dot] (m3) at (2.15,-0.45) {};
    \node[dot] (m4) at (2.15,-1.55) {};

    % four internally disjoint length-three paths from 0+ to 0-
    \draw[sgpos] (p0)--(p2);
    \draw[sgpos] (p2)--(p1);
    \draw[sgneg] (p1)--(m0);

    \draw[sgpos] (p0)--(p4);
    \draw[sgpos] (p4)--(p3);
    \draw[sgneg] (p3)--(m0);

    \draw[sgneg] (p0)--(m1);
    \draw[sgpos] (m1)--(m2);
    \draw[sgpos] (m2)--(m0);

    \draw[sgneg] (p0)--(m3);
    \draw[sgpos] (m3)--(m4);
    \draw[sgpos] (m4)--(m0);

    \node[vlabel,above=2pt] at (p0) {$0^+$};
    \node[vlabel,below=2pt] at (m0) {$0^-$};
    \node[vlabel,above=2pt] at (p2) {$2^+$};
    \node[vlabel,left=3pt] at (p1) {$1^+$};
    \node[vlabel,above=2pt] at (p4) {$4^+$};
    \node[vlabel,right=3pt] at (p3) {$3^+$};
    \node[vlabel,left=3pt] at (m1) {$1^-$};
    \node[vlabel,below=2pt] at (m2) {$2^-$};
    \node[vlabel,right=3pt] at (m3) {$3^-$};
    \node[vlabel,below=2pt] at (m4) {$4^-$};
  \end{scope}
\end{tikzpicture}%
}
\caption{Two signings of the bowtie graph and the signed double cover of the
second signing.  Blue solid edges are positive and red solid edges are
negative in the base graphs.  In the right-hand diagram, colors record the
sign of the projected base edge; the cover itself is unsigned.  Its
underlying undirected graph consists of four internally disjoint paths of
length three joining $0^+$ to $0^-$.}
\label{fig:bowtie-sign-sensitive}
\end{figure}

The right-hand panel of \cref{fig:bowtie-sign-sensitive} shows that the
signed double cover consists of four internally disjoint paths of length three
joining \(0^+\) to \(0^-\), with no triangle or quadrilateral fillings.
Every non-backtracking allowed two-path in this cover has a non-allowed
shortcut face that occurs in no other allowed two-path.  Hence every
boundary-preserving two-chain is a linear combination of backtracks, whose
boundaries impose only the usual reversal relations on one-chains.  The deck
anti-invariant first GLMY homology is therefore one-dimensional.  To make a
generator visible in the diagram,
follow the six-cycle
\[
        0^+\to1^-\to2^-\to0^-\to4^-\to3^-\to0^+.
\]
This six-cycle determines the signed one-chain
\[
\begin{aligned}
 z_\sigma={}&[e_{0^+1^-}]+[e_{1^-2^-}]+[e_{2^-0^-}]\\
             &+[e_{0^-4^-}]+[e_{4^-3^-}]+[e_{3^-0^+}]
 \in \Omega_1^\sigma(\overleftrightarrow B).
\end{aligned}
\]
The boundary telescopes around this six-cycle, so
\[
        \partial_1^\sigma z_\sigma=0.
\]
Because every degree-two boundary is generated by backtrack relations,
this six-cycle is not a boundary.  Hence the homology class
\[
        [z_\sigma]\in\mathcal H_1^\sigma(B)
\]
is nonzero and generates the one-dimensional group.  Therefore
\[
        \mathcal H_1^\sigma(B)\cong\mathbb R.
\]
Thus the first canonical signed GLMY homology distinguishes the
all-positive signing from the signing with negative edges $01$ and $03$.
\end{example}

\section{Functoriality of signed GLMY homology}\label{sec:functoriality}

We now identify the natural morphisms compatible with the signed double-cover
construction and prove that signed GLMY homology is functorial with respect
to them.

\subsection{Signed digraph weak morphisms}

\begin{definition}\label{def:signed-weak-morphism}
Let $(D,\sigma)$ and $(D',\sigma')$ be signed digraphs.  A signed weak morphism
\[
        (f,\eta):(D,\sigma)\longrightarrow(D',\sigma')
\]
consists of a vertex map $f:V(D)\to V(D')$ and a switching function $\eta:V(D)\to\Signs$ such that for every arrow $i\to j$ of $D$ the following conditions hold.
\begin{enumerate}[label=\textup{(\roman*)}]
    \item If $f(i)\ne f(j)$, then $f(i)\to f(j)$ is an arrow of $D'$ and
    \[
            \sigma'(f(i)f(j))=\eta(i)\sigma(ij)\eta(j).
    \]
    \item If $f(i)=f(j)$, then
    \[
            \sigma(ij)=\eta(i)\eta(j).
    \]
\end{enumerate}
Equivalently, $f$ sends each arrow of $(D,\sigma^\eta)$ either to an
arrow of the same sign or, when $\sigma^\eta(ij)=+1$, to a vertex.
The switching function $\eta$ is part of the morphism data.
\end{definition}

\begin{example}
\label{ex:signed-weak-morphism-unbalanced-triangle}
Let $D$ have arrows $0\to1$, $1\to2$ and $2\to0$, with
\[
        \sigma(01)=-1,\qquad \sigma(12)=\sigma(20)=+1.
\]
The cycle $0\to1\to2\to0$ has sign $-1$.  Let $D'$ have vertices
$x,y$ and the
two arrows $x\to y$ and $y\to x$, with
\[
        \sigma'(xy)=-1,\qquad \sigma'(yx)=+1.
\]
The antiparallel arrows carry independent signs, as permitted by
\cref{def:signed-digraph}.  Set
\[
\begin{aligned}
        f(0)=f(1)&=x, & f(2)&=y,\\
        \eta(0)=\eta(2)&=+1, & \eta(1)&=-1.
\end{aligned}
\]
The switched signs are
\[
\begin{aligned}
        \sigma^\eta(01)&=(+1)(-1)(-1)=+1,\\
        \sigma^\eta(12)&=(-1)(+1)(+1)=-1,\\
        \sigma^\eta(20)&=(+1)(+1)(+1)=+1.
\end{aligned}
\]
Thus $f$ contracts the positive arrow $0\to1$ of $(D,\sigma^\eta)$
and sends $1\to2$ and $2\to0$ to $x\to y$ and $y\to x$ with the
same signs.  Hence $(f,\eta)$ is a signed weak morphism.  The switched
cycle still has sign $-1$.  The two stages, switching and contraction,
are shown in \cref{fig:signed-weak-morphism-example}.

\begin{figure}[htbp]
\centering
\begin{tikzpicture}[>=stealth,scale=1.0,
        ppos/.style={->,blue,thick},
        nneg/.style={->,red,thick},
        vtx/.style={circle,fill=black,inner sep=1.8pt}]

% left: original triangle
\begin{scope}[xshift=0cm]
    \node[vtx] (a0) at (-1.7,-.25) {};
    \node[vtx] (a1) at (0,1.15) {};
    \node[vtx] (a2) at (1.7,-.25) {};
    \node[left=2pt]  at (a0) {$0$};
    \node[above=2pt] at (a1) {$1$};
    \node[right=2pt] at (a2) {$2$};
    \draw[nneg] (a0) -- (a1);
    \draw[ppos] (a1) -- (a2);
    \draw[ppos] (a2) -- (a0);
    \node[font=\small] at (0,-0.55) {original signing $\sigma$};
    \node[font=\small] at (0,-0.95) {unbalanced};
\end{scope}

% arrow to switched
\draw[->,thick] (1.65,.65) -- node[above,font=\small] {$(\operatorname{id}_D,\eta)$} (4.35,.65);

% middle: switched triangle
\begin{scope}[xshift=6.0cm]
    \node[vtx] (b0) at (-1.7,-.25) {};
    \node[vtx] (b1) at (0,1.15) {};
    \node[vtx] (b2) at (1.7,-.25) {};
    \node[left=2pt]  at (b0) {$0$};
    \node[above=2pt] at (b1) {$1$};
    \node[right=2pt] at (b2) {$2$};
    \draw[ppos] (b0) -- (b1);
    \draw[nneg] (b1) -- (b2);
    \draw[ppos] (b2) -- (b0);
    \node[font=\small] at (0,-0.55) {switched signing $\sigma^\eta$};
    \node[font=\small] at (0,-0.95) {$\sigma^\eta(01)=+1$};
\end{scope}

% arrow to target
\draw[->,thick] (7.85,.65) -- node[above,font=\small] {$(f,\mathbf1)$} (10.15,.65);

% right: target digraph
\begin{scope}[xshift=11.3cm]
    \node[vtx,label=above:{$x$}] (c0) at (0,1.3) {};
    \node[vtx,label=right:{$y$}] (c1) at (0,0) {};
    \draw[ppos,bend right=28] (c1) to (c0);
    \draw[nneg,bend right=28] (c0) to (c1);
    \node[font=\small] at (0,-0.65) {target $(D',\sigma')$};
    \node[font=\small,align=center] at (0,-1.1) {$f(0)=f(1)=x$, $f(2)=y$};
\end{scope}
\end{tikzpicture}
\caption{The morphism of \cref{ex:signed-weak-morphism-unbalanced-triangle}
as a switching followed by a contraction.  Switching changes the negative
arrow from $0\to1$ to $1\to2$; the positive arrow $0\to1$ is then
contracted.  Blue arrows are positive and red arrows are negative.}
\label{fig:signed-weak-morphism-example}
\end{figure}
\FloatBarrier
\end{example}

For composable signed weak morphisms
$(f,\eta):(D,\sigma)\to(D',\sigma')$ and
$(g,\theta):(D',\sigma')\to(D'',\sigma'')$, define
\[
        (g,\theta)\circ(f,\eta)
        =
        (g\circ f,\zeta),
        \qquad
        \zeta(i)=\theta(f(i))\eta(i).
\]
The composite satisfies \cref{def:signed-weak-morphism}.  If an arrow
\(i\to j\) is retained by both maps, its sign condition follows by
multiplying the two sign identities.
If \(f\) contracts \(i\to j\), then \(\sigma(ij)=\eta(i)\eta(j)\), and the
formula for \(\zeta\) gives the contraction condition for \(g\circ f\).  If
\(f(i)\ne f(j)\) but \(g\) contracts the arrow \(f(i)\to f(j)\), then
\[
        \eta(i)\sigma(ij)\eta(j)
        =
        \sigma'(f(i)f(j))
        =
        \theta(f(i))\theta(f(j)),
\]
which again gives \(\sigma(ij)=\zeta(i)\zeta(j)\).
The identity is \((\operatorname{id}_D,\mathbf1)\), where
\(\mathbf1(v)=+1\) for every vertex.  For three successive morphisms \((f,\eta)\), \((g,\theta)\) and
\((h,\psi)\), the switching factor at \(v\) is
\(\psi(g(f(v)))\theta(f(v))\eta(v)\), independently of how the
composition is bracketed.  Thus composition is associative.

\begin{lemma}\label{lem:weak-morphism-lift}
Let \((f,\eta):(D,\sigma)\to(D',\sigma')\) be a signed weak morphism,
and let \(\tau,\tau'\) and \(\pi,\pi'\) be the deck involutions and
projections of the respective covers.  The pair $(f,\eta)$ induces the weak digraph morphism
\[
        \widetilde f:\widetilde D_\sigma\longrightarrow
        \widetilde D'_{\sigma'},
        \qquad
        \widetilde f(i^\varepsilon)=f(i)^{\eta(i)\varepsilon}.
\]
The lift depends on the specified pair \((f,\eta)\) and satisfies
\[
        \pi'\circ\widetilde f=f\circ\pi,
        \qquad \widetilde f\circ\tau=\tau'\circ\widetilde f.
\]
The sheet label over \(i\) is preserved when \(\eta(i)=+1\) and
reversed when \(\eta(i)=-1\).
\end{lemma}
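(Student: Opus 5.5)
We must verify three things: that \(\widetilde f\) is a weak digraph morphism, meaning every arrow is sent either to an arrow or to a single vertex; that \(\widetilde f\) intertwines the projections and the deck involutions; and the statement about sheet labels. The plan is to check each directly from \cref{def:signed-double-cover} and \cref{def:signed-weak-morphism}. The sheet-label statement is immediate from the formula \(\widetilde f(i^\varepsilon)=f(i)^{\eta(i)\varepsilon}\). Likewise \(\pi'\widetilde f(i^\varepsilon)=f(i)=f(\pi(i^\varepsilon))\), and
\[
\widetilde f(\tau i^\varepsilon)=f(i)^{-\eta(i)\varepsilon}=\tau'\widetilde f(i^\varepsilon).
\]

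The substantive step is the arrow condition. I take a lifted arrow \(i^\varepsilon\to j^{\varepsilon\sigma(ij)}\) lying over an arrow \(i\to j\) of \(D\). Its image is the pair
\[
f(i)^{\eta(i)\varepsilon}\quad\text{and}\quad f(j)^{\eta(j)\varepsilon\sigma(ij)}.
\]
I then split into the two cases of \cref{def:signed-weak-morphism}.

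In case (i), \(f(i)\ne f(j)\), so \(f(i)\to f(j)\) is an arrow of \(D'\). Its lift starting at \(f(i)^{\eta(i)\varepsilon}\) ends at
\[
f(j)^{\eta(i)\varepsilon\sigma'(f(i)f(j))}=f(j)^{\eta(i)\varepsilon\,\eta(i)\sigma(ij)\eta(j)}=f(j)^{\varepsilon\sigma(ij)\eta(j)}.
\]
This is exactly the image endpoint, so the image is an arrow of \(\widetilde D'_{\sigma'}\). This is the same computation as in the proof of \cref{prop:signed-digraph-switching-invariance}.

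In case (ii), \(f(i)=f(j)\), and the condition \(\sigma(ij)=\eta(i)\eta(j)\) gives
\[
\eta(j)\sigma(ij)=\eta(i),
\]
so both endpoints map to the same lifted vertex \(f(i)^{\eta(i)\varepsilon}\). The arrow is therefore contracted to a vertex, which is what a weak morphism allows.

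The only point needing care is case (ii). Without the contraction condition, the two endpoints would land on the two distinct lifts \(f(i)^{\pm}\) of one base vertex, and these are not joined by any arrow because \(D'\) is loopless. The sign identity in (ii) is exactly what rules this out. Finally, I will note that \(\widetilde f\) is determined by the pair \((f,\eta)\). Replacing \(\eta\) by \(-\eta\) on a component composes \(\widetilde f\) with \(\tau'\) there, so the lift genuinely depends on the specified switching data.
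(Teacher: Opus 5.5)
Your proposal is correct and matches the paper's proof. You split on conditions (i) and (ii) of \cref{def:signed-weak-morphism} to show that each lifted arrow maps either to the lifted arrow over \(f(i)\to f(j)\) or to the single vertex \(f(i)^{\eta(i)\varepsilon}\), and you read the projection and deck identities directly off the formula for \(\widetilde f\). Your extra remark on the dependence on \(\eta\) is also correct; the paper makes the same point later for the global replacement \(\eta\mapsto-\eta\).
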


\begin{proof}
Let $i^\varepsilon\to j^{\varepsilon\sigma(ij)}$ be an arrow of $\widetilde D_\sigma$ lying over $i\to j$.  If $f(i)\ne f(j)$, then $f(i)\to f(j)$ is an arrow of $D'$ and
\[
        \eta(j)\varepsilon\sigma(ij)
        =
        \eta(i)\varepsilon\sigma'(f(i)f(j)).
\]
Thus $\widetilde f$ sends the lifted arrow to the lifted arrow over $f(i)\to f(j)$.  If $f(i)=f(j)$, then $\sigma(ij)=\eta(i)\eta(j)$, so
\[
        f(j)^{\eta(j)\varepsilon\sigma(ij)}
        =
        f(i)^{\eta(i)\varepsilon}.
\]
Thus the lifted arrow is contracted to one vertex.  Hence $\widetilde f$ is a weak digraph morphism.  The identity
\[
        \widetilde f(\tau(i^\varepsilon))
        =
        \tau'(\widetilde f(i^\varepsilon))
\]
and the projection identity follow from the definition of
$\widetilde f$.
\end{proof}

For the morphism in \cref{ex:signed-weak-morphism-unbalanced-triangle},
the lift is given, for $\varepsilon\in\Signs$, by
\[
 \widetilde f(0^\varepsilon)=x^\varepsilon,\qquad
 \widetilde f(1^\varepsilon)=x^{-\varepsilon},\qquad
 \widetilde f(2^\varepsilon)=y^\varepsilon.
\]

\begin{definition}\label{def:induced-signed-chain-map}
Let $(f,\eta):(D,\sigma)\to(D',\sigma')$ be a signed weak morphism, and let
$\widetilde f$ be its lift.  For every regular lifted path
\(\widetilde\gamma=i_0^{\varepsilon_0}\cdots i_p^{\varepsilon_p}\), its
vertexwise image is
\[
 \widetilde f(\widetilde\gamma)
 =f(i_0)^{\eta(i_0)\varepsilon_0}\cdots
  f(i_p)^{\eta(i_p)\varepsilon_p}.
\]
Define
\[
        (f,\eta)_\#\bigl([e_{\widetilde\gamma}]\bigr)
        =
        \begin{cases}
        [e_{\widetilde f(\widetilde\gamma)}],&
        \widetilde f(\widetilde\gamma)\text{ is regular},\\
        0,&
        \widetilde f(\widetilde\gamma)\text{ is not regular}.
        \end{cases}
\]
The identity $\widetilde f\tau=\tau'\widetilde f$ ensures that the
vertexwise path map preserves the deck relations and hence defines
$(f,\eta)_\#$ on $R_p^\sigma(D)$.  By
\cref{lem:weak-morphism-lift}, an allowed lifted path has either an
allowed image or a nonregular image; a nonregular image represents zero in
the target regular path module.  Thus $(f,\eta)_\#$ restricts to
\[
        (f,\eta)_\#:\mathcal A_p^\sigma(D)
        \longrightarrow\mathcal A_p^{\sigma'}(D').
\]
\end{definition}

For an allowed base path \(\gamma=i_0\cdots i_p\), write
\(f(\gamma)=f(i_0)\cdots f(i_p)\).  For generators represented by lifts starting in the positive sheet, the
induced map takes the form
\[
 (f,\eta)_\#[e_{\widetilde\gamma^+}]
 =\begin{cases}
   \eta(i_0)[e_{\widetilde{f(\gamma)}^{\,+}}],
       &f(\gamma)\text{ is regular},\\
   0,   &f(\gamma)\text{ is nonregular}.
  \end{cases}
\]
The image lift begins in sheet $\eta(i_0)$.  If $\eta(i_0)=-1$, the deck
relation converts the negative-sheet representative into minus the
positive-sheet representative, which accounts for the coefficient
$\eta(i_0)$.  When consecutive base images coincide, the collapse condition
identifies the corresponding lifted vertices.

In \cref{ex:signed-weak-morphism-unbalanced-triangle}, the three arrow
generators whose initial vertices lie in the positive sheet satisfy
\[
\begin{aligned}
 (f,\eta)_\#[e_{0^+1^-}]&=0,\\
 (f,\eta)_\#[e_{1^+2^+}]&=[e_{x^-y^+}]=-[e_{x^+y^-}],\\
 (f,\eta)_\#[e_{2^+0^+}]&=[e_{y^+x^+}].
\end{aligned}
\]
The first line is the contracted-arrow case.  In the second line,
$\eta(1)=-1$ places the image lift in the negative sheet, and the deck
relation gives $[e_{x^-y^+}]=-[e_{x^+y^-}]$.

\begin{lemma}\label{lem:lift-chain-map-compatibility}
For every $p$,
\[
        \Lambda'_p (f,\eta)_\#
        =
        \widetilde f_\# \Lambda_p,
\]
where $\Lambda_p$ and $\Lambda'_p$ are the isomorphisms of
\cref{prop:anti-invariant-realization}.
\end{lemma}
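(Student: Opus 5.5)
Both sides of the identity are linear maps $R_p^\sigma(D)\to\mathcal R_p(\widetilde D'_{\sigma'})^-$, so I will check it on the spanning classes $[e_{\widetilde\gamma}]$, one for each regular lifted path $\widetilde\gamma=i_0^{\varepsilon_0}\cdots i_p^{\varepsilon_p}$. Here $\widetilde f_\#$ is the ordinary GLMY path map of the weak digraph morphism $\widetilde f$ from \cref{lem:weak-morphism-lift}, applied to regular path modules. It sends $e_{\widetilde\delta}$ to $e_{\widetilde f(\widetilde\delta)}$ when the vertexwise image is regular and to $0$ otherwise. First I will confirm that $\widetilde f_\#$ maps $\mathcal R_p(\widetilde D_\sigma)^-$ into $\mathcal R_p(\widetilde D'_{\sigma'})^-$. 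This follows from $\widetilde f\circ\tau=\tau'\circ\widetilde f$: the image of $\tau\widetilde\delta$ is $\tau'$ applied to the image of $\widetilde\delta$, and regularity is preserved by $\tau'$.

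For the left side, $\Lambda_p[e_{\widetilde\gamma}]=e_{\widetilde\gamma}-e_{\tau\widetilde\gamma}$, so
\[
\widetilde f_\#\Lambda_p[e_{\widetilde\gamma}]=\widetilde f_\#e_{\widetilde\gamma}-\widetilde f_\#e_{\tau\widetilde\gamma}.
\]
I then split into two cases according to whether $\widetilde f(\widetilde\gamma)$ is regular.

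In the regular case, the right side equals $e_{\widetilde f(\widetilde\gamma)}-e_{\tau'\widetilde f(\widetilde\gamma)}$, using the equivariance above. By \cref{def:induced-signed-chain-map}, $(f,\eta)_\#[e_{\widetilde\gamma}]=[e_{\widetilde f(\widetilde\gamma)}]$. Applying $\Lambda'_p$ gives the same expression.

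In the nonregular case, $\widetilde f(\widetilde\gamma)$ is nonregular, and so is $\widetilde f(\tau\widetilde\gamma)=\tau'\widetilde f(\widetilde\gamma)$, since $\tau'$ preserves equality of adjacent lifted vertices. Hence both sides vanish.

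The one point that needs care is well-definedness: the formula must be independent of the chosen representative of a class, since $[e_{\tau\widetilde\gamma}]=-[e_{\widetilde\gamma}]$. Both sides are antisymmetric under $\widetilde\gamma\mapsto\tau\widetilde\gamma$, so checking one representative per deck orbit suffices. The statement says ``for every $p$'', so I will prove it on the full regular modules $R_p^\sigma(D)$, not only on $\mathcal A_p^\sigma(D)$. This costs nothing extra, and it will later give compatibility with the boundaries via \cref{prop:anti-invariant-realization} and the standard fact that $\widetilde f_\#$ is a chain map for weak digraph morphisms.
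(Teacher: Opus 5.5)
Your proposal is correct and matches the paper's proof. Both arguments evaluate the two composites on a generator $[e_{\widetilde\gamma}]$, use $\widetilde f\tau=\tau'\widetilde f$ to identify both sides with $e_{\widetilde f(\widetilde\gamma)}-e_{\tau'\widetilde f(\widetilde\gamma)}$ when $\widetilde f(\widetilde\gamma)$ is regular, and note that both vanish by the regular-path convention otherwise.

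Your separate well-definedness check is unnecessary: both sides are composites of linear maps already defined on $R_p^\sigma(D)$, so agreement on a spanning set suffices. Also, you call $\widetilde f_\#\Lambda_p$ the ``left'' side and then the ``right'' side partway through.
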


\begin{proof}
The identity holds on the ambient regular path modules.  Let
$\widetilde\gamma$ be a regular lifted path.  If
$\widetilde f(\widetilde\gamma)$ is regular, then
\[
\begin{aligned}
\Lambda'_p\bigl((f,\eta)_\#([e_{\widetilde\gamma}])\bigr)
&=e_{\widetilde f(\widetilde\gamma)}
  -e_{\tau'\widetilde f(\widetilde\gamma)}\\
&=\widetilde f_\#
  \bigl(e_{\widetilde\gamma}-e_{\tau\widetilde\gamma}\bigr)
 =\widetilde f_\#\bigl(\Lambda_p([e_{\widetilde\gamma}])\bigr),
\end{aligned}
\]
where the second equality uses
$\widetilde f\tau=\tau'\widetilde f$.  If
$\widetilde f(\widetilde\gamma)$ is not regular, both sides are zero by the
regular-path convention.
\end{proof}

\begin{theorem}[Functoriality for signed digraphs]\label{thm:signed-digraph-functoriality}
A signed weak morphism
\[
        (f,\eta):(D,\sigma)\longrightarrow(D',\sigma')
\]
induces a chain map
\[
        (f,\eta)_\#:
        \Omega_\bullet^\sigma(D)
        \longrightarrow
        \Omega_\bullet^{\sigma'}(D'),
        \qquad
        \partial_*^{\sigma'}(f,\eta)_\#
        =(f,\eta)_\#\partial_*^\sigma ,
\]
and therefore linear maps
\[
        (f,\eta)_*:H_p^\sigma(D)\longrightarrow H_p^{\sigma'}(D')
\]
for all \(p\ge0\).  The induced map $(f,\eta)_*$ sends the homology
class \([z]\) to \([(f,\eta)_\#z]\), and the induced maps satisfy
\[
 (\operatorname{id}_D,\mathbf1)_*=\operatorname{id},\qquad
 \bigl((g,\theta)\circ(f,\eta)\bigr)_*
       =(g,\theta)_*\circ(f,\eta)_*.
\]
Thus \((D,\sigma)\mapsto H_p^\sigma(D)\) is a functor from signed
digraphs with signed weak morphisms to real vector spaces.
\end{theorem}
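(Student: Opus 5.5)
The plan is to transport the whole statement to the signed double covers and use the known functoriality of ordinary GLMY homology under weak digraph morphisms.

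By \cref{lem:weak-morphism-lift}, the lift \(\widetilde f:\widetilde D_\sigma\to\widetilde D'_{\sigma'}\) is a weak digraph morphism. By the standard GLMY theory \cite{GLMY2012,GLMY2020}, the vertexwise path map \(\widetilde f_\#\) then satisfies two properties:
\begin{itemize}
\item it commutes with the regular boundary \(\partial_*\) on regular path modules, with nonregular images set to zero;
\item it carries \(\Omega_p(\widetilde D_\sigma)\) into \(\Omega_p(\widetilde D'_{\sigma'})\).
\end{itemize}
The first property is the usual check that consecutive collapses cancel in pairs. The second holds because allowed paths map to allowed or nonregular paths, and the boundary of the image is the image of an allowed boundary. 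Since \(\widetilde f\tau=\tau'\widetilde f\), the map \(\widetilde f_\#\) commutes with the deck involutions. It therefore restricts to a chain map \(\Omega_\bullet(\widetilde D_\sigma)^-\to\Omega_\bullet(\widetilde D'_{\sigma'})^-\).

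Next I would define the signed chain map as the conjugate \(\Lambda'^{-1}\widetilde f_\#\Lambda\). By \cref{lem:lift-chain-map-compatibility}, this equals \((f,\eta)_\#\). By \cref{prop:double-cover-description}, \(\Lambda\) and \(\Lambda'\) restrict to chain isomorphisms onto the anti-invariant GLMY subcomplexes. Hence \((f,\eta)_\#\) maps \(\Omega_p^\sigma(D)\) into \(\Omega_p^{\sigma'}(D')\) and commutes with \(\partial^\sigma_*\) and \(\partial^{\sigma'}_*\). As a consistency check, one can also verify the chain-map identity directly on \(R^\sigma_\bullet\). The conjugation argument carries the boundary identity \(\Lambda_{p-1}\partial^\sigma_{\mathrm{reg},p}=\partial_p\Lambda_p\) of \cref{prop:anti-invariant-realization} across. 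The induced map on homology, \([z]\mapsto[(f,\eta)_\#z]\), is then well defined by the usual argument.

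For the functor axioms:
\begin{itemize}
\item \textbf{Identity.} The lift of \((\operatorname{id}_D,\mathbf 1)\) is the identity of \(\widetilde D_\sigma\), so \((\operatorname{id},\mathbf1)_\#=\operatorname{id}\) already at chain level.
\item \textbf{Composition.} I would first check at the level of lifts that \(\widetilde{g\circ f}=\widetilde g\circ\widetilde f\) for the composite pair \((g\circ f,\zeta)\) with \(\zeta(i)=\theta(f(i))\eta(i)\). Indeed \(\widetilde g(\widetilde f(i^\varepsilon))=g(f(i))^{\theta(f(i))\eta(i)\varepsilon}\).
\item \textbf{Chain-level composition.} It remains to show \((\widetilde g\widetilde f)_\#=\widetilde g_\#\widetilde f_\#\) on regular path modules.
\end{itemize}

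The main obstacle is this last chain-level identity, because of the nonregular convention. Suppose \(\widetilde f(\widetilde\gamma)\) is nonregular. Then some consecutive pair coincides, and its image under \(\widetilde g\) still coincides, so both sides vanish. Suppose instead \(\widetilde f(\widetilde\gamma)\) is regular but \(\widetilde g\) creates a collapse. Then the composite path is nonregular as well. So the vertexwise maps compose exactly, and conjugating by \(\Lambda\) gives \(((g,\theta)\circ(f,\eta))_\#=(g,\theta)_\#(f,\eta)_\#\). Passing to homology yields the stated identities.
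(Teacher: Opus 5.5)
Your proposal is correct and follows essentially the same route as the paper: lift \((f,\eta)\) to the deck-equivariant weak morphism \(\widetilde f\) of \cref{lem:weak-morphism-lift}, invoke ordinary GLMY functoriality, restrict to the anti-invariant subcomplexes, and identify \((f,\eta)_\#=(\Lambda'_\bullet)^{-1}\widetilde f_\#\Lambda_\bullet\) via \cref{lem:lift-chain-map-compatibility,prop:double-cover-description}. Your explicit checks that \(\widetilde{g\circ f}=\widetilde g\circ\widetilde f\) for \(\zeta(i)=\theta(f(i))\eta(i)\), and that vertexwise path maps compose under the convention that nonregular paths are zero, spell out what the paper compresses into its final sentence.
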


\begin{proof}
By \cref{lem:weak-morphism-lift}, $\widetilde f$ is a weak morphism of
ordinary digraphs.  Ordinary GLMY functoriality
\cite{GLMY2014Homotopy} gives a chain map
$\widetilde f_\#:\Omega_\bullet(\widetilde D_\sigma)\to
\Omega_\bullet(\widetilde D'_{\sigma'})$.  Deck equivariance implies that
$\widetilde f_\#$ maps $\Omega_\bullet(\widetilde D_\sigma)^-$ into
$\Omega_\bullet(\widetilde D'_{\sigma'})^-$.
By \cref{lem:lift-chain-map-compatibility,prop:double-cover-description},
the restriction to the anti-invariant subcomplex corresponds to the
signed chain map
$(f,\eta)_\#=(\Lambda'_\bullet)^{-1}\widetilde f_\#\Lambda_\bullet$.
The lifted maps preserve identities and compose according to
$\zeta(i)=\theta(f(i))\eta(i)$, so identities and composition are
preserved on signed chains and on homology.
\end{proof}

The switching function is essential data in a signed weak morphism.  Indeed,
if $(f,\eta)$ is a signed weak morphism, then so is $(f,-\eta)$, since the
two minus signs cancel in each edge condition.  However, the corresponding
lifts satisfy
\[
        \widetilde f_{-\eta}=\tau'\circ\widetilde f_{\eta}.
\]
Since $\tau'$ acts as $-\operatorname{id}$ on the anti-invariant
subcomplex of the target double cover, the induced homology maps satisfy
\[
        (f,-\eta)_*=-(f,\eta)_*.
\]
Thus the underlying vertex map $f$ alone does not determine the induced map
on signed GLMY homology.

\subsection{Signed graph weak morphisms}

\begin{definition}\label{def:signed-graph-weak-morphism}
Let $\Sigma=(G,\sigma)$ and $\Sigma'=(G',\sigma')$ be signed graphs.  A signed weak morphism
\[
        (f,\eta):\Sigma\longrightarrow\Sigma'
\]
consists of a vertex map $f:V(G)\to V(G')$ and a switching function $\eta:V(G)\to\Signs$ such that for every edge $uv\in E(G)$:
\begin{enumerate}[label=\textup{(\roman*)}]
    \item if $f(u)\ne f(v)$, then $f(u)f(v)\in E(G')$ and
    \[
            \sigma'(f(u)f(v))=\eta(u)\sigma(uv)\eta(v);
    \]
    \item if $f(u)=f(v)$, then
    \[
            \sigma(uv)=\eta(u)\eta(v).
    \]
\end{enumerate}
\end{definition}

The edge conditions in \cref{def:signed-graph-weak-morphism} are
equivalent to the signed weak digraph-morphism conditions on both arrows
of the bidirected completion.  In particular, every contracted edge
satisfies $\sigma^\eta(uv)=+1$.

\begin{theorem}[Functoriality for signed graphs]\label{thm:canonical-functoriality}
Every signed weak morphism
\[
        (f,\eta):\Sigma\longrightarrow\Sigma'
\]
induces linear maps
\[
        (f,\eta)_*:
        \mathcal H_p^\sigma(\Sigma)
        \longrightarrow
        \mathcal H_p^{\sigma'}(\Sigma')
\]
for all \(p\ge0\), and these maps are compatible with identities and
composition.  Hence canonical signed GLMY homology is functorial on the
category of signed graphs and signed weak morphisms.
\end{theorem}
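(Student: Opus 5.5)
The plan is to reduce the statement to \cref{thm:signed-digraph-functoriality} by passing through the bidirected completion. Given a signed weak morphism \((f,\eta):\Sigma\to\Sigma'\) of signed graphs, I will show that the same pair \((f,\eta)\) is a signed weak morphism of signed digraphs
\[
        (f,\eta):\overleftrightarrow\Sigma\longrightarrow\overleftrightarrow{\Sigma'}
\]
in the sense of \cref{def:signed-weak-morphism}. Then I will define \((f,\eta)_*\) on \(\mathcal H_p^\sigma(\Sigma)=H_p^\sigma(\overleftrightarrow\Sigma)\) as the map furnished by \cref{thm:signed-digraph-functoriality}.

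The first step is the arrow check. Each arrow of \(\overleftrightarrow\Sigma\) has the form \(u\to v\) or \(v\to u\) over an edge \(uv\in E(G)\), and both arrows carry the sign \(\sigma(uv)\).

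\emph{Case \(f(u)\ne f(v)\).} Condition~(i) of \cref{def:signed-graph-weak-morphism} gives \(f(u)f(v)\in E(G')\). Hence both \(f(u)\to f(v)\) and \(f(v)\to f(u)\) are arrows of \(\overleftrightarrow{\Sigma'}\), each with sign \(\sigma'(f(u)f(v))\). This sign equals \(\eta(u)\sigma(uv)\eta(v)\), and that expression is symmetric in \(u,v\). So condition~(i) of \cref{def:signed-weak-morphism} holds for both arrows.

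\emph{Case \(f(u)=f(v)\).} The relation \(\sigma(uv)=\eta(u)\eta(v)\) is likewise symmetric, so condition~(ii) holds for both arrows.

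This is exactly the equivalence already remarked after \cref{def:signed-graph-weak-morphism}, so this step is routine.

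The second step is compatibility with identities and composition. I will check that bidirected completion is itself functorial on morphism data. The identity \((\operatorname{id}_G,\mathbf1)\) of \(\Sigma\) is sent to the identity \((\operatorname{id},\mathbf1)\) of \(\overleftrightarrow\Sigma\). Composition of signed graph weak morphisms should be defined by the same formula
\[
        (g\circ f,\zeta),\qquad \zeta(u)=\theta(f(u))\eta(u),
\]
as for digraphs, so the completion of a composite is the composite of the completions. The verification that this composite is again a signed graph weak morphism mirrors the digraph case. The same three subcases apply: retained by both maps, contracted by \(f\), and contracted by \(g\). Alternatively, it follows from the digraph case together with the equivalence of edge and arrow conditions, since an edge condition holds iff the arrow conditions on both lifts hold. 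The identities \((\operatorname{id},\mathbf1)_*=\operatorname{id}\) and \(((g,\theta)\circ(f,\eta))_*=(g,\theta)_*\circ(f,\eta)_*\) then transfer directly from \cref{thm:signed-digraph-functoriality}.

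I expect no real obstacle. The only point needing care is making the composition law on signed graph morphisms explicit, so that composition commutes with bidirected completion; this is needed for the composition identity to be literally inherited. If desired, I can also note that, via \cref{prop:double-cover-description} and \cref{lem:lift-chain-map-compatibility}, the induced map agrees with the restriction of \(\widetilde f_*\) to the anti-invariant part of the homology of the double cover of the completion. This requires no extra work.
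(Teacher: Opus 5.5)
Your proposal is correct and follows the paper's proof. The paper likewise notes that bidirected completion turns a signed weak graph morphism into a signed weak digraph morphism, then invokes \cref{thm:signed-digraph-functoriality}; your explicit check that completion respects identities and the composition law $\zeta(u)=\theta(f(u))\eta(u)$ supplies a detail the paper leaves implicit.
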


\begin{proof}
The bidirected completion sends a signed weak graph morphism to a signed weak digraph morphism
\[
        \overleftrightarrow\Sigma
        \longrightarrow
        \overleftrightarrow{\Sigma'}.
\]
The conclusion follows from \cref{thm:signed-digraph-functoriality}.
\end{proof}

\subsection{A suspension example}\label{sec:functoriality-example}

\begin{example}\label{ex:suspension-functor}
Let $D_L$ have vertices
\[
        \{\ast,1,2,\ldots,8,9\}
\]
and arrows
\[
\begin{gathered}
        \ast\to i\quad (1\le i\le 8),
        \qquad
        9\to 2,4,6,8,\\
        2\to1,\quad 2\to3,\quad
        4\to3,\quad 4\to5,\quad
        6\to5,\quad 6\to7,\quad
        8\to7,\quad 8\to1.
\end{gathered}
\]
Let $D_R$ be the suspension of the directed $4$-cycle
\[
        0\to1\to2\to3\to0,
\]
with suspension vertices $v,w$ and arrows
\[
        v,w\to0,1,2,3.
\]
Define a vertex map $f:D_L\to D_R$ by
\[
        f(\ast)=v,\qquad f(9)=w,
\]
and
\[
        f(1)=f(2)=0,\quad
        f(3)=f(4)=1,\quad
        f(5)=f(6)=2,\quad
        f(7)=f(8)=3.
\]
Thus the arrows $2\to1$, $4\to3$, $6\to5$, and $8\to7$ are contracted,
while the remaining four arrows $2\to3$, $4\to5$, $6\to7$, and
$8\to1$ map to the directed $4$-cycle.

Given any signing $\sigma_R$ on $D_R$, define a signing on $D_L$ by
\[
        \sigma_L(xy)=
        \begin{cases}
        1,& f(x)=f(y),\\
        \sigma_R(f(x)f(y)),& f(x)\ne f(y).
        \end{cases}
\]
With \(\eta\equiv1\), the pair
\((f,\eta):(D_L,\sigma_L)\to(D_R,\sigma_R)\) is a signed weak morphism.
Here \(\beta_p^{\sigma_L}(D_L)\) and \(\beta_p^{\sigma_R}(D_R)\)
denote the \(p\)-th signed Betti numbers of the left-hand digraph
\(D_L\) with signing \(\sigma_L\) and the right-hand digraph
\(D_R\) with signing \(\sigma_R\), respectively.

\begin{figure}[H]
\centering
\resizebox{0.98\textwidth}{!}{%
\begin{tikzpicture}[>=stealth]
\tikzset{
    dot/.style={circle,fill=black,inner sep=1.7pt},
    vlabel/.style={fill=white,inner sep=0.8pt,font=\normalsize},
    posedge/.style={->,blue,thick},
    negedge/.style={->,red,thick}
}

\begin{scope}[scale=1.0]
    \node[dot] (s)  at ( 0.0,4.7) {};
    \node[dot] (x6) at (-2.2,2.0) {};
    \node[dot] (x2) at ( 2.2,2.0) {};
    \node[dot] (x4) at ( 0.0,3.0) {};
    \node[dot] (x8) at ( 0.0,1.2) {};
    \node[dot] (a1) at (-1.2,2.5) {};
    \node[dot] (a2) at ( 1.2,2.5) {};
    \node[dot] (b1) at (-1.2,1.5) {};
    \node[dot] (b2) at ( 1.2,1.5) {};
    \node[dot] (x9) at (-0.3,2.0) {};

    \draw[posedge,bend right=20] (s) to (x6);
    \draw[posedge,bend left=20]  (s) to (x2);
    \draw[posedge]              (s) -- (x4);
    \draw[posedge,bend left=25]  (s) to (x8);
    \draw[posedge,bend right=10] (s) to (a1);
    \draw[posedge,bend left=10]  (s) to (a2);
    \draw[posedge,bend right=15] (s) to (b1);
    \draw[posedge,bend left=15]  (s) to (b2);

    \draw[posedge,bend left=18]  (x9) to (x2);
    \draw[posedge,bend right=18] (x9) to (x6);
    \draw[posedge]              (x9) -- (x8);
    \draw[posedge,bend left=12]  (x9) to (x4);

    \draw[posedge,bend left=20]  (x6) to (a1);
    \draw[negedge,bend right=20] (x4) to (a1);
    \draw[posedge,bend left=20]  (x4) to (a2);
    \draw[negedge,bend right=20] (x2) to (a2);
    \draw[posedge,bend right=20] (x6) to (b1);
    \draw[posedge,bend left=20]  (x8) to (b1);
    \draw[posedge,bend right=20] (x8) to (b2);
    \draw[posedge,bend left=20]  (x2) to (b2);

    \node[vlabel,anchor=south,yshift=3pt] at (s)  {\(\ast\)};
    \node[vlabel,anchor=east,xshift=-4pt] at (x6) {\(6\)};
    \node[vlabel,anchor=west,xshift=4pt]  at (x2) {\(2\)};
    \node[vlabel,anchor=east,xshift=-4pt] at (x4) {\(4\)};
    \node[vlabel,anchor=south east,xshift=-2pt,yshift=2pt] at (a1) {\(5\)};
    \node[vlabel,anchor=south west,xshift=2pt,yshift=2pt]  at (a2) {\(3\)};
    \node[vlabel,anchor=north east,xshift=-2pt,yshift=-2pt] at (b1) {\(7\)};
    \node[vlabel,anchor=north west,xshift=2pt,yshift=-2pt]  at (b2) {\(1\)};
    \node[vlabel,anchor=east,xshift=-4pt] at (x9) {\(9\)};
    \node[vlabel,anchor=north,yshift=-4pt] at (x8) {\(8\)};
\end{scope}

\draw[->,thick] (3.05,2.35) -- node[above]{\Large \(f\)} (5.05,2.35);

\begin{scope}[xshift=7.20cm,scale=1.0]
    \node[dot] (vtx) at ( 0.00,4.70) {};
    \node[dot] (y3)  at (-0.35,3.45) {};
    \node[dot] (y0)  at (-1.80,2.80) {};
    \node[dot] (y2)  at ( 1.80,2.80) {};
    \node[dot] (y1)  at ( 0.35,2.05) {};
    \node[dot] (wtx) at ( 0.00,1.20) {};

    % Use symmetric Bézier paths so the suspension edges read as a smooth
    % outer frame and two clearly separated central strands.
    \draw[posedge] (vtx) .. controls (-0.68,4.58) and (-1.48,3.58) .. (y0);
    \draw[posedge] (vtx) .. controls ( 0.68,4.58) and ( 1.48,3.58) .. (y2);
    \draw[posedge] (vtx) -- (y3);
    \draw[posedge] (vtx) .. controls ( 0.40,4.05) and ( 0.52,2.72) .. (y1);
    \draw[posedge] (wtx) .. controls ( 0.68,1.30) and ( 1.48,2.08) .. (y2);
    \draw[posedge] (wtx) .. controls (-0.68,1.30) and (-1.48,2.08) .. (y0);
    \draw[posedge] (wtx) -- (y1);
    \draw[posedge] (wtx) .. controls (-0.40,1.83) and (-0.52,3.03) .. (y3);

    \draw[posedge] (y3) .. controls (-0.72,3.42) and (-1.32,3.20) .. (y0);
    \draw[posedge] (y2) .. controls ( 1.25,3.17) and ( 0.15,3.44) .. (y3);
    \draw[negedge] (y0) .. controls (-1.17,2.48) and (-0.35,2.22) .. (y1);
    \draw[negedge] (y1) .. controls ( 0.73,2.22) and ( 1.23,2.48) .. (y2);

    \node[vlabel,anchor=south,yshift=3pt] at (vtx) {\(v\)};
    \node[vlabel,anchor=north,yshift=-4pt] at (wtx) {\(w\)};
    \node[vlabel,anchor=east,xshift=-4pt] at (y0) {\(0\)};
    \node[vlabel,anchor=west,xshift=4pt]  at (y2) {\(2\)};
    \node[vlabel,anchor=south east,xshift=-3pt,yshift=2pt] at (y3) {\(3\)};
    \node[vlabel,anchor=north west,xshift=3pt,yshift=-2pt] at (y1) {\(1\)};
\end{scope}
\end{tikzpicture}%
}
\caption{The signed weak morphism in \cref{ex:suspension-functor} for the signing with negative arrows \(0\to1\) and \(1\to2\) in the suspension.  Blue solid arrows are positive and red solid arrows are negative; the induced signing on the left has negative arrows \(2\to3\) and \(4\to5\).}
\label{fig:suspension-functor}
\end{figure}

There are no allowed three-paths in \(D_L\).  For an allowed three-path
$i_0i_1i_2i_3$ in $D_R$, the vertices $i_1,i_2,i_3$ lie consecutively
on the directed four-cycle.  Deleting $i_2$ produces the non-allowed
face $i_0i_1i_3$.  For the fixed ordered pair $(i_1,i_3)$, there is a unique rim vertex $i_2$
satisfying $i_1\to i_2\to i_3$.  Hence the lifted face obtained by deleting
$i_2$ occurs in the boundary of only one allowed lifted three-path.  The coefficient of that
three-path must therefore be zero in every boundary-preserving chain.
Hence $\Omega_3^{\sigma_L}(D_L)=\Omega_3^{\sigma_R}(D_R)=0$, and
second homology is the kernel of the degree-two boundary in each case.
Writing \(\boldsymbol\beta^\rho(X)=(\beta_0^\rho(X),
\beta_1^\rho(X),\beta_2^\rho(X))\), the resulting Betti vectors are:
\begin{center}
\renewcommand{\arraystretch}{1.15}
\begin{tabular}{ccll}
\toprule
negative arrows in $D_R$ &
negative arrows in $D_L$ &
$\boldsymbol\beta^{\sigma_L}(D_L)$ &
$\boldsymbol\beta^{\sigma_R}(D_R)$\\
\midrule
$\varnothing$ & $\varnothing$ & $(1,0,1)$ & $(1,0,1)$\\
$\{0\to1\}$ & $\{2\to3\}$ & $(0,0,0)$ & $(0,0,0)$\\
$\{0\to1,\;1\to2\}$ & $\{2\to3,\;4\to5\}$ & $(0,2,0)$ & $(0,2,0)$\\
\bottomrule
\end{tabular}
\end{center}
The corresponding chain dimensions and boundary ranks are
\[
\begin{array}{c|ccc}
 &\dim\Omega_2^\sigma
 &\rank\bigl(\partial_1^\sigma\bigr)
 &\rank\bigl(\partial_2^\sigma\bigr)\\\hline
 D_L&(12,10,8)&(9,10,10)&(11,10,8)\\
 D_R&(8,6,4)&(5,6,6)&(7,6,4).
\end{array}
\]
Each tuple follows the order of the three signings in the preceding table;
the maps \(\partial_1^\sigma\) and \(\partial_2^\sigma\) are taken in the
corresponding signed GLMY complex of each digraph.
Together with the zero degree-three spaces, these ranks give the displayed
Betti vectors.
The third signing is drawn in \cref{fig:suspension-functor}.
For each pulled-back signing, the induced maps on homology are supplied
by \cref{thm:signed-digraph-functoriality}.
\end{example}

\section{A higher-dimensional sign-sensitive example}\label{sec:h2-example}

For a finite signed digraph $(D,\sigma)$, construct the ordinary digraph
$\widetilde D_\sigma$ and compute its GLMY chain complex
$\Omega_\bullet(\widetilde D_\sigma)$.  The deck involution $\tau$ acts on every chain space
\[
        \tau:\Omega_p(\widetilde D_\sigma)
        \longrightarrow \Omega_p(\widetilde D_\sigma),
        \qquad \tau^2=\operatorname{id},
\]
and commutes with the boundary.  Over $\mathbb R$, the signed chain group is
identified with the anti-invariant subspace
\[
        \Omega_p^\sigma(D)
        \cong \Omega_p(\widetilde D_\sigma)^-
        =\ker\bigl(\tau+\operatorname{id}\bigr),
\]
and the signed boundary is the restriction of the ordinary GLMY boundary to
this anti-invariant subcomplex.  Equivalently, one may use the anti-invariant generators
$e_{\widetilde\gamma}-e_{\tau\widetilde\gamma}$.  Thus the signed Betti
numbers are obtained from the ordinary boundary matrices of the double cover
by restricting them to the anti-invariant summands.
Since the invariant and anti-invariant chain spaces form a direct-sum
decomposition by subcomplexes, the same reduction holds on homology:
\[
        H_p^\sigma(D)
        \cong H_p(\widetilde D_\sigma)^-
        :=\ker\bigl(\tau_*+\operatorname{id}\bigr).
\]

The ordinary GLMY computation on the double cover can be carried out using
our existing program
\texttt{Digraph.py}\footnote{Chinese Patent Application
No.~CN202610465550.3, \emph{Method and apparatus for high-order graph
structure analysis based on directed path structures, electronic device, and
storage medium}, filed April~9, 2026.  Applicant: Beijing Institute of
Mathematical Sciences and Applications (BIMSA).  Inventors: Xiang Liu,
Jingyan Li, and Jie Wu.}; the signed GLMY homology is then obtained by
restricting the resulting boundary matrices to the deck anti-invariant
subspaces and computing their homology.  If $(D,\sigma)$ is
switching-balanced, \cref{prop:double-cover-splitting,cor:switching-balanced-reduction}
reduce the computation further to ordinary GLMY homology on $D$ itself.

\begin{example}\label{ex:h2-sign-sensitive}
Let $D$ have vertices $\{0,1,2,3,4\}$ and arrows
\[
        0\to1,\quad 1\to2,\quad 2\to0,
        \qquad
        0,1,2\to3,
        \qquad
        0,1,2\to4.
\]
Thus $0\to1\to2\to0$ is a directed $3$-cycle, and the vertices $3$ and $4$ are two common sinks.  In the ordinary GLMY complex this digraph has
\[
        (\beta_0,\beta_1,\beta_2)=(1,0,1).
\]
One representative of the ordinary $2$-class is
\[
        -e_{013}+e_{014}
        -e_{123}+e_{124}
        -e_{203}+e_{204}.
\]
The displayed two-cycle is the difference between the cone chains at
the sinks $4$ and $3$ over the directed cycle $0\to1\to2\to0$.

The arrows $01,12,03,04$ form a spanning tree of the underlying graph.
Every switching class contains a unique normalized signing satisfying
\[
        \sigma(01)=\sigma(12)=\sigma(03)=\sigma(04)=1.
\]
Write
\[
        x=\sigma(20),\quad
        y=\sigma(13),\quad
        z=\sigma(23),\quad
        r=\sigma(14),\quad
        s=\sigma(24).
\]
The signed two-path generators that can contribute to
$\Omega_2^\sigma(D)$ lie over the six cone triangles
\[
        013,\quad 014,\quad 123,\quad 124,\quad 203,\quad 204.
\]
In this computation, a base-path label \(\gamma\) denotes the signed
quotient generator \([e_{\widetilde\gamma^+}]\), where
\(\widetilde\gamma^+\) is its lift starting in the positive sheet.
In particular, the same convention is used for the row and column bases of
the boundary matrix below.  In the displayed order, these six classes belong to
\(\Omega_2^\sigma(D)\) under the respective conditions
\[
        y=1,\qquad r=1,\qquad z=y,\qquad s=r,\qquad z=x,\qquad s=x.
        \tag{*}
\]
Each cycle path $012,120,201$ has a non-allowed shortcut face that
occurs in the boundary of no other allowed two-path.  For any cone path whose condition in $(*)$ fails, its non-allowed lifted
shortcut likewise occurs in the boundary of no other allowed two-path.  Therefore every chain in
$\Omega_2^\sigma(D)$ has zero coefficients on the three cycle paths
and on all incompatible cone paths.  The compatible cone generators
form a basis of $\Omega_2^\sigma(D)$.  If $t$ is the number of
satisfied equations in $(*)$, then
\[
        \dim \Omega_2^\sigma(D)=t.
\]
The six equations in $(*)$ are equivalent, in the displayed order, to
requiring the six signs
\[
        y,\quad r,\quad yz,\quad rs,\quad zx,\quad sx
\]
to equal $+1$.  Their product is always $1$.  Conversely, given six signs
\((a_1,\ldots,a_6)\) with product \(1\), the assignments
\[
 y=a_1,\quad r=a_2,\quad z=a_1a_3,\quad
 s=a_2a_4,\quad x=a_1a_3a_5
\]
determine the unique normalized tuple $(x,y,z,r,s)$; the equation
\(sx=a_6\) follows from the product constraint.  Thus the number of failed equations
in \((*)\) is even.  Consequently
\[
        t=6,4,2,0
\]
and the numbers of switching classes for $t=6,4,2,0$, respectively, are
\[
        \binom60,\quad \binom62,\quad \binom64,\quad \binom66.
\]
The underlying graph is connected, so the kernel of the vertex-switching
action consists of the two constant functions.  Each switching class
therefore has \(2^{|V|-1}=16\) signings.  Thus the cases $t=6,4,2,0$
contain $16,240,240,16$ signings and $1,15,15,1$ switching classes,
respectively.  The rank calculation below shows that the four values
of $t$ correspond to four distinct signed Betti vectors.

\begin{samepage}
Order the row basis by the signed degree-one generators associated with
the base arrows
\[
01,12,20,03,13,23,04,14,24,
\]
and order the six potential cone columns by the base two-paths
\[
013,014,123,124,203,204.
\]
Here each base-path label \(\gamma\) stands for the signed quotient
generator \([e_{\widetilde\gamma^+}]\) introduced before the compatibility
conditions $(*)$.  With these
ordered bases, the degree-two boundary matrix is obtained by retaining
the compatible columns of
\[
M_x=\begin{pmatrix}
1&1&0&0&0&0\\
0&0&1&1&0&0\\
0&0&0&0&1&1\\
-1&0&0&0&x&0\\
1&0&-1&0&0&0\\
0&0&1&0&-1&0\\
0&-1&0&0&0&x\\
0&1&0&-1&0&0\\
0&0&0&1&0&-1
\end{pmatrix}.
\]
\end{samepage}
A vector $(c_1,\ldots,c_6)\in\mathbb R^6$ belongs to $\ker M_x$ precisely when,
for some $a\in\mathbb R$,
\[
        (c_1,c_2,c_3,c_4,c_5,c_6)
        =a(1,-1,1,-1,1,-1),\qquad (x-1)a=0.
\]
Thus \(M_1\) has rank \(5\), with a unique relation involving all six
columns, and \(M_{-1}\) has rank \(6\).  Every proper subset of the
columns is independent.  All six compatibility conditions hold precisely
when \(x=y=z=r=s=1\).

In fact, \(\Omega_p^\sigma(D)=0\) for every \(p\ge3\).  Let
$i_0\cdots i_p$ be an allowed $p$-path with $p\ge3$.  Since $3$ and $4$
are sinks, the vertices $i_0,i_1,i_2$ lie consecutively on the directed
three-cycle.  Deleting $i_1$ produces the regular face $i_0i_2\cdots i_p$,
whose first step $i_0i_2$ is not an arrow.  The intermediate vertex
$i_1$ is uniquely determined by $i_0$ and $i_2$, so the corresponding
lifted face occurs in the boundary of only one allowed lifted $p$-path.
Consequently, every allowed lifted $p$-path has coefficient zero in a
boundary-preserving signed chain, and $\Omega_p^\sigma(D)=0$.

The twisted incidence matrix has rank \(4\) when the signing is
switching-balanced and rank \(5\) otherwise: a vector in its left kernel
must satisfy \(h(j)=\sigma(ij)h(i)\) on each arrow, and these equations
have a nonzero solution precisely in the switching-balanced case.
In the normalized coordinates, the signing is switching-balanced
exactly when \(t=6\).  Moreover, every signed arrow is boundary-preserving,
so $\dim\Omega_0^\sigma(D)=5$ and $\dim\Omega_1^\sigma(D)=9$ for every
signing.  Using the degree-specific signed boundaries
\(\partial_1^\sigma\) and \(\partial_2^\sigma\) defined in
\cref{def:signed-glmy-complex}, we obtain
\[
\begin{aligned}
\beta_0^\sigma
&=5-\rank\bigl(\partial_1^\sigma:
   \Omega_1^\sigma(D)\to\Omega_0^\sigma(D)\bigr),\\
\beta_1^\sigma
&=9-\rank\bigl(\partial_1^\sigma:
   \Omega_1^\sigma(D)\to\Omega_0^\sigma(D)\bigr)
   -\rank\bigl(\partial_2^\sigma:
   \Omega_2^\sigma(D)\to\Omega_1^\sigma(D)\bigr),\\
\beta_2^\sigma
&=\dim\Omega_2^\sigma(D)
  -\rank\bigl(\partial_2^\sigma:
   \Omega_2^\sigma(D)\to\Omega_1^\sigma(D)\bigr).
\end{aligned}
\]
Consequently, the \(2^9\) signings yield exactly the following four Betti
vectors, and their signed homology vanishes in every degree \(p\ge3\).
\begin{center}
\begin{tabular}{ccl}
\toprule
$(\beta_0^\sigma,\beta_1^\sigma,\beta_2^\sigma)$
& number of signings & one representative set of negative arrows\\
\midrule
$(1,0,1)$ & 16  & $\varnothing$\\
$(0,0,0)$ & 240 & $\{2\to4\}$\\
$(0,2,0)$ & 240 & $\{2\to3,\;2\to4\}$\\
$(0,4,0)$ & 16  & $\{2\to0,\;1\to3,\;1\to4\}$\\
\bottomrule
\end{tabular}
\end{center}

\begin{figure}[htbp]
\centering
\resizebox{0.9\textwidth}{!}{%
\begin{tikzpicture}[x=1.15cm,y=1.05cm,node distance=1mm]
\definecolor{signnegred}{RGB}{200,64,53}
\tikzset{
  htwopos/.style={-{Stealth[length=2.2mm,width=1.6mm]},draw=blue,line width=.95pt},
  htwoneg/.style={-{Stealth[length=2.2mm,width=1.6mm]},draw=signnegred,line width=1.05pt},
  htwovertex/.style={circle,draw=black,fill=black,line width=.35pt,inner sep=1.8pt},
  htwolabel/.style={font=\small,fill=white,inner sep=.6pt}
}
\newcommand{\hTwoVertices}{%
  \node[htwovertex] (v0) at (-1.75,0) {};
  \node[htwovertex] (v1) at (.38,.72) {};
  \node[htwovertex] (v2) at (1.75,0) {};
  \node[htwovertex] (v3) at (0,2.05) {};
  \node[htwovertex] (v4) at (0,-1.75) {};
  \node[htwolabel,left=1.5pt of v0] {$0$};
  \node[htwolabel,above right=1.5pt of v1] {$1$};
  \node[htwolabel,right=1.5pt of v2] {$2$};
  \node[htwolabel,above=1.5pt of v3] {$3$};
  \node[htwolabel,below=1.5pt of v4] {$4$};
}
\newcommand{\hTwoEdge}[3]{\draw[#1] (#2)--(#3);}

\node[font=\large] at (0,6.05)
  {Same digraph, different signs, different signed GLMY homology groups};

\begin{scope}[shift={(-3.7,2.45)}]
  \node[align=right,font=\small,anchor=east] at (-1.95,1.15)
    {$\boldsymbol\beta^\sigma=(1,0,1)$\\16 signings\\all-positive representative};
  \hTwoVertices
  \hTwoEdge{htwopos}{v0}{v1}
  \hTwoEdge{htwopos}{v1}{v2}
  \hTwoEdge{htwopos}{v2}{v0}
  \hTwoEdge{htwopos}{v0}{v3}
  \hTwoEdge{htwopos}{v1}{v3}
  \hTwoEdge{htwopos}{v2}{v3}
  \hTwoEdge{htwopos}{v0}{v4}
  \hTwoEdge{htwopos}{v1}{v4}
  \hTwoEdge{htwopos}{v2}{v4}
\end{scope}

\begin{scope}[shift={(3.7,2.45)}]
  \node[align=left,font=\small,anchor=west] at (1.95,1.15)
    {$\boldsymbol\beta^\sigma=(0,0,0)$\\240 signings\\negative-arrow set\\$\{2\to4\}$};
  \hTwoVertices
  \hTwoEdge{htwopos}{v0}{v1}
  \hTwoEdge{htwopos}{v1}{v2}
  \hTwoEdge{htwopos}{v2}{v0}
  \hTwoEdge{htwopos}{v0}{v3}
  \hTwoEdge{htwopos}{v1}{v3}
  \hTwoEdge{htwopos}{v2}{v3}
  \hTwoEdge{htwopos}{v0}{v4}
  \hTwoEdge{htwopos}{v1}{v4}
  \hTwoEdge{htwoneg}{v2}{v4}
\end{scope}

\begin{scope}[shift={(-3.7,-2.65)}]
  \node[align=right,font=\small,anchor=east] at (-1.95,1.15)
    {$\boldsymbol\beta^\sigma=(0,2,0)$\\240 signings\\negative-arrow set\\$\{2\to3,2\to4\}$};
  \hTwoVertices
  \hTwoEdge{htwopos}{v0}{v1}
  \hTwoEdge{htwopos}{v1}{v2}
  \hTwoEdge{htwopos}{v2}{v0}
  \hTwoEdge{htwopos}{v0}{v3}
  \hTwoEdge{htwopos}{v1}{v3}
  \hTwoEdge{htwoneg}{v2}{v3}
  \hTwoEdge{htwopos}{v0}{v4}
  \hTwoEdge{htwopos}{v1}{v4}
  \hTwoEdge{htwoneg}{v2}{v4}
\end{scope}

\begin{scope}[shift={(3.7,-2.65)}]
  \node[align=left,font=\small,anchor=west] at (1.95,1.15)
    {$\boldsymbol\beta^\sigma=(0,4,0)$\\16 signings\\negative-arrow set\\$\{2\to0,1\to3,1\to4\}$};
  \hTwoVertices
  \hTwoEdge{htwopos}{v0}{v1}
  \hTwoEdge{htwopos}{v1}{v2}
  \hTwoEdge{htwoneg}{v2}{v0}
  \hTwoEdge{htwopos}{v0}{v3}
  \hTwoEdge{htwoneg}{v1}{v3}
  \hTwoEdge{htwopos}{v2}{v3}
  \hTwoEdge{htwopos}{v0}{v4}
  \hTwoEdge{htwoneg}{v1}{v4}
  \hTwoEdge{htwopos}{v2}{v4}
\end{scope}

\draw[htwopos] (-1.1,-5.15)--(-.35,-5.15)
  node[right,black,font=\small]{positive arrow};
\draw[htwoneg] (2.1,-5.15)--(2.85,-5.15)
  node[right,black,font=\small]{negative arrow};
\end{tikzpicture}%
}
\caption{The fixed digraph in \cref{ex:h2-sign-sensitive} with four
representative signings.  Arrowheads specify the common orientation; blue
solid edges are positive and red solid edges are negative.  Varying only the edge signs
changes the signed GLMY Betti vector from \((1,0,1)\) to \((0,0,0)\),
\((0,2,0)\), or \((0,4,0)\).}
\label{fig:h2-signing-classes}
\end{figure}

For the fixed digraph $D$ in this example, the second signed homology
is one-dimensional precisely
for switching-balanced signings and vanishes for every other signing.
Among the unbalanced signings, the first signed homology has dimension
$0$, $2$ or $4$, while the zero-dimensional group vanishes in each of the
three unbalanced Betti-vector cases listed above.
\end{example}

\FloatBarrier
\section{Conclusion and outlook}\label{sec:conclusion}

The signed GLMY complex is naturally isomorphic to the deck
anti-invariant subcomplex of the ordinary GLMY chain complex on the signed
double cover.  The double-cover realization yields switching invariance and
recovers ordinary GLMY homology for switching-balanced signings.  Bidirected
completion defines canonical signed graph homology, with
\[
 \mathcal H_p^\sigma(\Sigma)=H_p^\sigma(\overleftrightarrow\Sigma),
 \qquad \mathcal H_0^\sigma(\Sigma)\cong\ker L^\sigma(\Sigma).
\]
The zero-dimensional identification uses the standard vertex inner
product.  For signed digraphs, the all-positive reduction retains the
orientation sensitivity of ordinary GLMY homology.  The computations in
\cref{ex:unbalanced-triangle,ex:canonical-sign-sensitive-bowtie,ex:h2-sign-sensitive}
show, in addition, that changing only the signing can alter the first and
second signed GLMY homology groups.  Signed weak morphisms induce maps on
signed GLMY homology through deck-equivariant maps of covers.

Further questions include combinatorial descriptions and spectral estimates
for the signed GLMY Laplacians, as well as stability under weighted
filtrations.  Extensions to gain graphs and higher-rank local systems, a
compatible homotopy theory, and a realization theorem remain open.  For
signed regulatory networks, one may ask which homology classes persist under
perturbations of arrow weights and signs.

\section*{Acknowledgments}

Shuliang Bai was supported by the National Natural Science Foundation of China
(NSFC, Grant No.~12301434).  Jingyan Li was supported by the Beijing Natural
Science Foundation (International Scientists Project, Grant No.~IS25081) and
the National Natural Science Foundation of China (NSFC, General Program, Grant
No.~82573048).

% Use the supplied bibliography directly; no .bib file or BibTeX run is required.

\end{document}